\theoremstyle{plain} \newtheorem{theorem}{Theorem}[section]
\theoremstyle{plain} 
\theoremstyle{plain} \newtheorem{lemma}[theorem]{Lemma}
\theoremstyle{plain} \newtheorem{corollary}[theorem]{Corollary}
\theoremstyle{definition} \newtheorem{definition}[theorem]{Definition}
\theoremstyle{definition} 
\theoremstyle{remark} \newtheorem{remark}[theorem]{Remark}
\theoremstyle{remark} \newtheorem{example}[theorem]{Example}
\newcommand{\R}{\mathbb{R}}
\newcommand{\Sphere}{\mathbb{S}^{d-1}}
\newcommand{\neutral}{\mathbf{e}}
\newcommand{\sK}{{\mathcal{K}}}
\newcommand{\sC}{{\mathcal{C}}}
\newcommand{\KK}{{\mathbb{K}}}
\newcommand{\sCo}{\sC_0}
\newcommand{\sKo}{\sK_0}
\newcommand{\eps}{\varepsilon}
\DeclareMathOperator{\Cvx}{Cvx}
\newcommand{\thf}{\frac{1}{2}}
\newlength{\querylen}
\begin{document}

\title{Continued fractions built from convex sets and convex
  functions}

\author{Ilya Molchanov\footnote{Supported by the Swiss National
    Science Foundation grant 200021-137527 and the Santander Bank
    through
    the Chair of Excellence programme at the University Carlos III of Madrid.}\\
  \normalsize Institute of Mathematical Statistics and Actuarial
  Science\\
  \normalsize University of Bern, Sidlerstrasse 5, 3012 Bern, Switzerland\\
  \normalsize ilya.molchanov@stat.unibe.ch}

\date{\today}
\date{}
\maketitle

\begin{abstract}
  In a partially ordered semigroup with the duality (or polarity)
  transform, it is possible to define a generalisation of continued
  fractions. General sufficient conditions for convergence of
  continued fractions are provided.  Two particular applications
  concern the cases of convex sets with the Minkowski addition and the
  polarity transform
  and the family of
  non-negative convex functions with the Legendre--Fenchel and
  Artstein-Avidan--Milman transforms.

  AMS Classification: 46B10, 06F05, 11J70, 20M14, 26B25, 44A15, 52A22, 52A41

  Keywords: continued fraction, semigroup, duality, polarity, convex
  body, convex function, Legendre--Fenchel transform, partial order
\end{abstract}

\section{Introduction}
\label{sec:introduction}

The studies of order reversing involutions (also called dualities or
polarities) on partially ordered spaces have recently gained a
considerable attention. If $\KK$ is a partially ordered space, then the
map from $x\in \KK$ to $x^*\in\KK$ is said to be an order reversing
involution if $x^{**}=x$ for all $x$ and $x\leq y$ implies that
$y^*\leq x^*$.

The two main examples are the family of convex sets containing the
origin and ordered by inclusion and the family of convex functions on
$\R^d$ ordered pointwisely. It is shown in
\cite{art:mil08t,boer:sch08} that the only (up to a rigid motion)
order reversing involution on the family of compact convex sets
containing the origin is the classical polar transform, see
\cite{schn2} and Section~\ref{sec:cont-fract-conv}. For the family of
non-negative convex functions on $\R^d$ that vanish at the origin, only
two involutions (up to rigid motions) exist: one is the classical
Legendre--Fenchel transform \cite{roc70} and the other is the 
A-transform, see \cite{art:mil09,art:mil11h} and
Section~\ref{sec:space-non-negative}.

It is also possible to endow the space $\KK$ with an addition
operation that turns it into an abelian semigroup. Such an addition
may be chosen to be the lattice operation corresponding to the order
or defined otherwise. For example, on the family of closed convex sets
partially ordered by inclusion it is possible to consider the convex
hull of the union as the semigroup operation or add sets using the
Minkowski (elementwise) addition. In the case of convex functions, a
natural semigroup operation is the arithmetic addition, while it is
also possible to consider the epigraphical or level sums. In this
paper, it is assumed that $\KK$ is an abelian semigroup with the
additive operation that is consistent with the order.

The order reversing nature of the involution makes possible to
define a continued fraction on a semigroup. In comparison with
classical numerical continued fractions, the additive operation is the
semigroup addition, while the one over operation is replaced by the
involution. The classical concept of continued fraction is recovered
for the semigroup $[0,\infty]$ with the conventional addition and the
involution given by the arithmetic inverse. The semigroup setting 
differs from the setting of continued fractions in Jordan algebras
pursued in \cite{bern95} and the studies of multidimensional continued
fractions in \cite{karp13} and \cite{kon:suh99}. 

This paper argues that convergence results for continued fractions
built from convex sets and those from convex functions can be derived
from general statements concerning continued fractions on
semigroups. The non-existence of an inverse operation to the addition
renders impossible the direct use of most of the tools from the
classical theory of continued fractions, see
\cite{jon:thr80}. Instead, the key emphasis is put on the partial
order together with bounds on the Lipschitz constant for the
involution transform. These bounds are needed for properly chosen
subsets of the original semigroup, e.g. for convex sets that contain
the unit ball. The latter in the abstract setting becomes an
involution-invariant element, which is also the key ingredient to
define a suitable metric. In this very general setting, several
sufficient conditions for convergence of continued fractions with
constant and variable terms are obtained in
Section~\ref{sec:convergence-results}. A particular attention is
devoted to periodic continued fractions, whose limits may be regarded
as a generalisation of quadratic irrational numbers in the semigroup
setting.

The general results are applied for set-valued continued fractions in
Section~\ref{sec:cont-fract-conv}. For example, a continued fraction
with a constant term being a set $K$ converges if $K$ is sandwiched
between two Euclidean balls with diameters $r$ and $R$ such that
either $r>1$, or $r=1$ and $R$ is finite, or $r<1$ and $R<r/(1-r)$,
see Theorem~\ref{thr:constant}. In the set-valued case, we obtain a
necessary and sufficient condition for the convergence that amounts to
the fact that an odd-numbered approximant of the continued fraction is
a subset of the ball with radius strictly less than one. The key
argument is the Lipschitz property of the polarity transform meaning
that the Hausdorff distance between the polars of two convex sets
containing a centred ball of radius $r$ is bounded by $r^{-2}$ times
the Hausdorff distance between the original sets.

Section~\ref{sec:space-non-negative} presents several convergence
results for continued fractions of convex functions both for the
Legendre--Fenchel and A-transforms. It is rather easy to
modify these results to apply to the semigroup of log-concave
functions with the multiplication. Another possible application left
outside of the framework of this paper is for the semigroup of
probability measures with the convolution operation as the addition
and the involution inherited by an application of the involution
operation on the original space.

\section{Continued fractions on semigroups}
\label{sec:cont-fract-semigr}

Let $\KK$ be a \emph{partially ordered abelian semigroup} with the
neutral element $\neutral$. Assume that $\neutral\leq x$ for all
$x\in\KK$ and that the order is compatible with the addition, that is
$x\leq y$ for $x,y\in\KK$ implies $x+z\leq y+z$ for all
$z\in\KK$. Assume that, the order is weaker than the semigroup order,
i.e.  $y=x+z$ for some $z\in\KK$ yields that $x\leq y$.  In this case,
the semigroup is reduced, meaning that the only invertible element is
$\neutral$.

Assume that $\KK$ is equipped with an \emph{order reversing
  involution} $x\mapsto x^*$ (also called duality or polarity
transform), so that $x^{**}=x$ and $x\leq y$ implies that $x^*\geq
y^*$. The involution is not assumed to commute with the addition,
i.e. $(x+y)^*$ is not necessarily $x^*+y^*$.  Since $\neutral\leq x$
for each $x\in\KK$, we have $x^*\leq \neutral^*$ meaning that
$\neutral^*$ dominates all elements from $\KK$ and
$\neutral^*+x=\neutral^*$ for all $x\in\KK$.

Consider a sequence $\{x_n,n\geq 1\}$ of elements from $\KK$ and
define the sequence $[x_1,\dots,x_n]$, $n\geq1$, recursively by
letting
\begin{align*}
  [x_1]&=x_1^*,\\
  [x_1,\dots,x_{n+1}]&=(x_1+[x_2,\dots,x_{n+1}])^*,\quad n\geq 1\,.
\end{align*}
The element $z_n=[x_1,\dots,x_n]$ is said to be the \emph{$n$th
  approximant} of the \emph{continued fraction} generated by
$\{x_n,n\geq1\}$.

\begin{remark}
  \label{rem:dual-addition}
  The dual operation to the addition is defined by setting
  \begin{displaymath}
    x\oplus y=(x^*+y^*)^*\,.
  \end{displaymath} 
  Then $[x_1,\dots,x_n]=x_1^*\oplus[x_2,\dots,x_n]^*$.
\end{remark}

\begin{example}
  If $\KK=[0,\infty]$ with the conventional addition and involution
  $x^*=\frac{1}{x}$, then
  \begin{displaymath}
    z_n=[x_1,\dots,x_n]=\frac{1}{x_1+\frac{1}{x_2+\frac{1}{x_3+\cdots+\frac{1}{x_n}}}}
  \end{displaymath}
  is the $n$th approximant of the numerical continued fraction, see
  e.g. \cite{jon:thr80}.
\end{example}

\begin{remark}
  The neutral element and its dual influence the continued fraction as
  follows
  \begin{align*}
    [x_1,\dots,x_m,\neutral^*,x_{m+1},\dots,x_n]
    &=[x_1,\dots,x_m],\\
    [x_1,\dots,x_m,\neutral,x_{m+1},\dots,x_n]
    &=[x_1,\dots,x_m+x_{m+1},\dots,x_n]\,,\quad m\neq 0,n,\\
    [\neutral,x_1,\dots,x_n] &=[x_1,\dots,x_n]^*=x_1+[x_2,\dots,x_n],\\
    [x_1,\dots,x_n,\neutral] &=[x_1,\dots,x_{n-1}].
  \end{align*}
\end{remark}

The following result generalises the well-known property of continued
fractions with positive terms.

\begin{lemma}
  \label{lemman:zn}
  The $n$th approximant $z_n=[x_1,\dots,x_n]$ is increasing in each of
  the even numbered terms and decreasing in each of the odd numbered
  terms.  The sequence $\{z_{2m},m\geq1\}$ is increasing and
  $\{z_{2m-1},m\geq1\}$ is decreasing.
\end{lemma}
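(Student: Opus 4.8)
The plan is to prove the monotonicity in the individual terms first, by induction on $n$, and then deduce the monotonicity of the two subsequences as a special case. Throughout I will use repeatedly the two defining facts about the order: it is compatible with the addition (so $x\leq y$ implies $x+z\leq y+z$), and the involution is order reversing (so $x\leq y$ implies $x^*\geq y^*$). Composing these, I get the basic building block: the map $y\mapsto (x+y)^*$ is order reversing for every fixed $x$, and more generally $y\mapsto (x_1+(x_2+\cdots+(x_k+y)^*\cdots)^*)^*$ is order preserving when $k$ is even and order reversing when $k$ is odd.

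For the statement about the $j$th term of $z_n=[x_1,\dots,x_n]$, I would argue as follows. Fix $j\leq n$ and replace $x_j$ by $x_j'\geq x_j$, keeping all other terms fixed. By the recursion, $[x_j,\dots,x_n]$ and $[x_j',\dots,x_n]$ differ only in their first term; since $x_j\mapsto [x_j,x_{j+1},\dots,x_n]=(x_j+[x_{j+1},\dots,x_n])^*$ is order reversing in $x_j$ (one addition, one involution), increasing $x_j$ to $x_j'$ \emph{decreases} the tail approximant $[x_j,\dots,x_n]$. Now $z_n$ is obtained from this tail by applying the block $w\mapsto [x_1,\dots,x_{j-1},w\text{-completed}]$, i.e.\ by forming $(x_1+(x_2+\cdots+(x_{j-1}+[x_j,\dots,x_n])^*\cdots)^*)^*$, which is a composition of $j-1$ order-reversing maps of the form $y\mapsto (x_i+y)^*$. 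Hence the overall dependence of $z_n$ on $[x_j,\dots,x_n]$ is order preserving if $j-1$ is even and order reversing if $j-1$ is odd. Combining the sign from the tail step ($-$, always) with the sign from the $j-1$ wrapping steps gives: $z_n$ is decreasing in $x_j$ when $j-1$ is even, i.e.\ $j$ odd, and increasing in $x_j$ when $j-1$ is odd, i.e.\ $j$ even, which is exactly the claim. To make this rigorous I would phrase the induction on the number of leading terms: prove by induction on $k$ that the map sending $y\in\KK$ to $[x_1,\dots,x_k,y]$ (interpreted via the recursion, with the convention that it equals $x_1$-style wrapping of $y$) is order reversing for $k$ even and order preserving for $k$ odd — the base case $k=0$ is $y\mapsto y^*$, order reversing, and the inductive step prepends one more $(x+\cdot)^*$, flipping the sign.

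For the subsequence monotonicity I would use the identity that inserting the neutral element lengthens the fraction trivially: from the Remark, $[x_1,\dots,x_{2m}]$ and $[x_1,\dots,x_{2m},\neutral,\neutral]$; but a cleaner route is to note that $z_{2m+2}=[x_1,\dots,x_{2m},x_{2m+1},x_{2m+2}]$ while $z_{2m}=[x_1,\dots,x_{2m}]$, and to compare them by viewing both as the value of the order-preserving map $y\mapsto [x_1,\dots,x_{2m-1}, y]$ (here $k=2m-1$ is odd, so by the first part this map is \emph{order preserving} — wait, I must be careful with parity) applied to two different arguments. Concretely: write $z_{2m}$ as obtained by wrapping $[x_1,\dots,x_{2m-1},\,\cdot\,]$ around the element $x_{2m}^{*}$-type tail, and $z_{2m+2}$ by wrapping the same block around $[x_{2m},x_{2m+1},x_{2m+2}]$; then it suffices to show $x_{2m}^{\,}\leq$ (the appropriate comparison holds at the innermost level) and propagate. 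The safe and honest way, which I would actually write, is: prove directly that $z_{n}\leq z_{n+2}$ or $z_n\geq z_{n+2}$ by induction on $n$ using the two-step contraction $z_{n+2}=(x_1+(x_2+[x_3,\dots,x_{n+2}])^*)^*$ versus $z_n=(x_1+(x_2+[x_3,\dots,x_n])^*)^*$, so that $z_{n+2}\gtrless z_n$ reduces to $[x_3,\dots,x_{n+2}]\gtrless[x_3,\dots,x_n]$ with the \emph{same} direction of inequality (two order-reversing wraps compose to order preserving); the base cases are $n=1$, where $z_3=[x_1,x_2,x_3]\leq [x_1]=z_1$ because $[x_2,x_3]=(x_2+x_3^*)^*\leq x_2^*$ (indeed $x_2\leq x_2+x_3^*$ by the semigroup-order hypothesis, hence $(x_2+x_3^*)^*\geq$ — no: $\leq$ — $x_2^{*}$), and $n=2$, where $z_4\geq z_2$ follows similarly.

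The main obstacle I anticipate is purely bookkeeping of parities: one must be scrupulous that each ``$(x+\cdot)^*$'' contributes exactly one order-reversal, that the innermost base case ($[x_1]=x_1^*$ versus the empty completion) is handled with the right convention, and that the comparison $z_1\geq z_3\geq z_5\geq\cdots$ versus $z_2\leq z_4\leq\cdots$ is seeded correctly. The only genuine use of a hypothesis beyond ``order reversing'' and ``order compatible with addition'' is in the base case, where I need $x\leq x+z$ for all $z$ — this is exactly the ``order weaker than the semigroup order'' assumption — to get $[x_2,\ldots,x_n]\le x_2^{*}$ and thus start the alternation. I would present the whole argument as a single induction proving simultaneously the order-type of $y\mapsto[x_1,\dots,x_k,y]$ and, as corollaries, both assertions of the lemma.
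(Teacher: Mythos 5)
Your overall strategy is the paper's: the paper's entire justification of the first claim is the phrase ``a direct check shows\dots'', and your parity count of order-reversing layers $y\mapsto(x_i+y)^*$ is exactly that check made explicit. For the second claim the paper reuses the first claim (it views $z_{2m+2}$ as a $(2m+1)$-term fraction whose last, odd-numbered, entry is dominated by $\neutral^*$, so that $z_{2m+2}\geq[x_1,\dots,x_{2m},\neutral^*]=z_{2m}$), whereas you run a fresh two-step induction; the ingredients (two order-reversals compose to an order-preserving map, plus minimality of $\neutral$) are the same, so this is a cosmetic difference.

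There is, however, one concrete error: your base case $n=1$ is justified by the wrong inequality. From $[x_2,x_3]\leq x_2^*=[x_2]$ you get $x_1+[x_2,x_3]\leq x_1+x_2^*$ and hence $z_3\geq z_2$ --- a comparison of $z_3$ with $z_2$, not with $z_1$. What you need for $z_3\leq z_1$ is the comparison of the tail with $\neutral$ rather than with $[x_2]$: since $\neutral\leq[x_2,x_3]$ (every element dominates $\neutral$, equivalently $x_1\leq x_1+[x_2,x_3]$ by the ``order weaker than the semigroup order'' hypothesis you correctly single out), one gets $z_3=(x_1+[x_2,x_3])^*\leq x_1^*=z_1$. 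The same substitution fixes the base case $n=2$. With that repair the induction closes and the proof is complete.
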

\begin{proof}
  A direct check shows that $[x_1,\dots,x_k,\dots,x_n]$ is increasing
  in $x_k$ if $k$ is even and decreasing if $k$ is odd.  Then
  \begin{displaymath}
    z_{2m+2}=[x_1,\dots,x_{2m},(x_{2m+1}+x_{2m+2}^*)^*]
    \geq [x_1,\dots,x_{2m},\neutral^*]
    =[x_1,\dots,x_{2m}]=z_{2m}\,.
  \end{displaymath}
  A similar argument applies to the odd part of the continued
  fraction.
\end{proof}

From now on, assume that $\KK$ is equipped with the \emph{scaling
  transformation} $x\mapsto ax$ by positive real numbers $a$. It is
assumed that the scaling satisfies the distributivity laws and that
$(ax)^*=a^{-1}x^*$ for all $a>0$ and $x\in\KK$. In particular, the
second distributivity law implies that $ax\leq bx$ for $a\leq b$.

Fix any $h\in\KK$ such that $h^*=h$ (in this case $h$ is said to be
\emph{self-polar}) and define, for $x,y\in\KK$,
\begin{equation}
  \label{eq:rhoh}
  \rho_h(x,y)=\inf\{t\geq 0:\; x\leq y+th,\; y\leq x+th\}\,.
\end{equation}
Note that $\rho_h$ is a semimetric that might take infinite values and
is scale-homogeneous, that is $\rho_h(ax,ay)=a\rho_h(x,y)$ for
$a>0$. The (possibly infinite) norm of $x\in\KK$ is defined as
$\|x\|_h=\rho_h(x,\neutral)$. 

Let $\KK_h$ be the family of $x\in\KK$ such that $x\leq ah$ for some
$a>0$. Note that $\KK_h$ is a sub-semigroup of $\KK$.  Since
$\rho_h(x,y)\leq a$ for $x\leq ah$ and $y\leq ah$, $\rho_h(x,y)$ takes
finite values for $x,y\in\KK_h$. If $x\in\KK_h$, then
$\rho_h(a_nx,\neutral)\to0$ as $a_n\downarrow0$. Denote
\begin{displaymath}
  \KK_h^*=\{x\in\KK:\;x^*\in\KK_h\}\,.
\end{displaymath}
It is sensible to let $0h=\neutral$ and $\infty
h=\neutral^*$.

In the following we assume that 
\begin{equation}
  \label{eq:3}
  \bigcap_{t>0} \{x\in\KK:\; x\leq y+th\}=\{x\in\KK:\; x\leq y\}.
\end{equation}

\begin{lemma}
  \label{lemma:order}
  If \eqref{eq:3} holds, then $\rho_h$ is a metric on $\KK_h$ and the
  order is closed, that is the set $\{(x,y):\; x\leq y\}$ is closed in
  the product space $(\KK_h,\rho_h)\times(\KK_h,\rho_h)$.
\end{lemma}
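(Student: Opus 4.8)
The plan is to prove the two assertions in turn. First, for the metric claim on $\KK_h$: since $\rho_h$ is already a scale-homogeneous semimetric taking finite values on $\KK_h$, the only thing left to verify is that $\rho_h(x,y)=0$ implies $x=y$. Suppose $\rho_h(x,y)=0$. Then for every $t>0$ we have $x\leq y+th$ and $y\leq x+th$. Applying \eqref{eq:3} to the first family of inequalities (with the roles of $x$ and $y$ as in that display) yields $x\leq y$, and applying it again with $x$ and $y$ interchanged yields $y\leq x$. Since the order is a partial order (antisymmetric), $x=y$. The triangle inequality and symmetry are part of $\rho_h$ being a semimetric, so this completes the first claim.

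Next, for closedness of the order in $(\KK_h,\rho_h)\times(\KK_h,\rho_h)$: I would take sequences $x_n\to x$ and $y_n\to y$ in $\rho_h$ with $x_n\leq y_n$ for all $n$, and show $x\leq y$. Fix $\eps>0$. For large $n$ we have $\rho_h(x_n,x)<\eps$ and $\rho_h(y_n,y)<\eps$, hence $x\leq x_n+\eps h$ and $y_n\leq y+\eps h$. Combining with $x_n\leq y_n$ and using compatibility of the order with addition together with the distributivity/monotonicity of scaling gives $x\leq x_n+\eps h\leq y_n+\eps h\leq y+2\eps h$. Since this holds for all $\eps>0$, the set $\bigcap_{t>0}\{z:\ z\leq y+th\}$ contains $x$, and by \eqref{eq:3} this set equals $\{z:\ z\leq y\}$, so $x\leq y$. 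I should be slightly careful that $\eps h$ and $2\eps h$ are legitimate elements and that the chain of inequalities is valid: this uses $h\in\KK$, the assumed scaling laws, and transitivity of $\leq$, all of which are in force.

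The only genuinely delicate point is making sure the infimum defining $\rho_h(x_n,x)$ being less than $\eps$ really gives the two inequalities $x\leq x_n+\eps h$ and $x_n\leq x+\eps h$ — i.e. that the infimum in \eqref{eq:rhoh}, even if not attained, can be replaced by a slightly larger value $\eps$ for which the inequalities hold. This follows because if $\rho_h(x_n,x)<\eps$ there is some $t<\eps$ in the defining set, whence $x\leq x_n+th\leq x_n+\eps h$ by the monotonicity $th\leq \eps h$ and compatibility with addition; similarly for the other inequality. Beyond this, everything is a routine assembly of the standing axioms, so I do not expect a real obstacle; the role of hypothesis \eqref{eq:3} is exactly to close the gap between "$x\leq y+th$ for all $t>0$" and "$x\leq y$", which is what both parts of the lemma ultimately rely on.
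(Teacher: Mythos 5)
Your proof is correct and follows essentially the same route as the paper: antisymmetry via \eqref{eq:3} for the metric claim, and the chain $x\leq x_n+\eps h\leq y_n+\eps h\leq y+2\eps h$ followed by \eqref{eq:3} for closedness of the order. Your extra care about the infimum in \eqref{eq:rhoh} not being attained is a sound (if routine) detail the paper leaves implicit.
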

\begin{proof}
  Because of \eqref{eq:3}, $\rho_h(x,y)=0$ yields that $x\leq y$ and
  $y\leq x$, so that $x=y$. Other properties of the metric are
  evidently satisfied.  If $\rho_h(x_n,x)\to 0$ and $\rho_h(y_y,y)\to
  0$ and $x_n\leq y_n$ for all $n$, then for a sequence $t_n\to0$ one
  has
  \begin{displaymath}
    x\leq x_n+t_nh\leq y_n+t_nh\leq y+2t_nh\,,
  \end{displaymath}
  so that $x\leq y$ by \eqref{eq:3}.
\end{proof}

\begin{remark}[Multiple self-polar elements]
  If $h_1$ and $h_2$ are two distinct self-polar elements, then
  $h_1\leq a h_2$ either holds for some $a>1$ or does not hold for any
  $a>0$. Indeed, by passing to the polars, the inequality becomes
  $h_1\geq a^{-1}h_2$, so that $a^{-1}h_2\leq h_1\leq ah_2$. If
  $h_1\leq a h_2$, then $a^{-1}\rho_{h_1}(x,y)\leq \rho_{h_2}(x,y)\leq
  a\rho_{h_1}(x,y)$ for all $x,y\in\KK$.
\end{remark}

\begin{definition}
  The continued fraction generated by a sequence $x_n\in\KK$,
  $n\geq1$, is said to converge if $z_n=[x_1,\dots,x_n]$ converges in
  $\rho_h$ as $n\to\infty$ to an element of $\KK$.
\end{definition}

\begin{example}
  If $x_n=b_n h$ for a self-polar $h\in\KK$ and non-negative real
  numbers $b_n$, $n\geq1$, then $z_n=[b_1,\dots,b_n]h$, so that the
  convergence of $z_n$ can be derived from the convergence of the
  numerical continued fractions. For instance, the Seidel-Stern
  theorem asserts that $z_n$ converges if and only if $\sum
  b_n=\infty$. While it is tempting to conjecture that the continued
  fraction in $\KK$ converges if $\sum x_n=\neutral^*$,
  Example~\ref{ex:seidel} shows that this is wrong.
\end{example}

\begin{lemma}
  \label{lem:bound}
  Let $r_nh\leq x_n\leq R_nh$, $n\geq1$, for $h\in\KK$ such that
  $h^*=h$. Then
  \begin{displaymath}
    [R_1,r_2,\dots,a_n]h\leq [x_1,\dots,x_n]
    \leq [r_1,R_2,\dots,b_n]h\,, 
  \end{displaymath}
  where $a_n=r_n$ and $b_n=R_n$ if $n$ is even and $a_n=R_n$ and
  $b_n=r_n$ if $n$ is odd. 
\end{lemma}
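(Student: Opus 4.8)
The plan is to combine two facts. The first is the homogeneity identity
\begin{displaymath}
  [b_1h,\dots,b_nh]=[b_1,\dots,b_n]\,h
\end{displaymath}
valid for any non-negative reals $b_1,\dots,b_n$ and any self-polar $h$, where the left-hand side is the continued fraction in $\KK$ and the right-hand side is the numerical approximant multiplied by $h$, with the conventions $0h=\neutral$ and $\infty h=\neutral^*$ handling vanishing denominators. This identity was noted in the example preceding the lemma; it follows by induction on $n$, the base case being $[b_1h]=(b_1h)^*=b_1^{-1}h^*=b_1^{-1}h$ (using $h^*=h$) and the inductive step using the first distributivity law, $b_1h+[b_2,\dots,b_n]h=(b_1+[b_2,\dots,b_n])h$, followed by $(ax)^*=a^{-1}x^*$. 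The second fact is the monotonicity established in Lemma~\ref{lemman:zn}: $[x_1,\dots,x_n]$ is nonincreasing in each odd-numbered argument and nondecreasing in each even-numbered argument, the remaining arguments being held fixed.

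For the upper bound I would start from $[x_1,\dots,x_n]$ and change the arguments one at a time. Replacing an odd-indexed $x_k$ by $r_kh\le x_k$ does not decrease the approximant, and replacing an even-indexed $x_k$ by $R_kh\ge x_k$ does not decrease it either; since the monotonicity of Lemma~\ref{lemman:zn} holds separately in each coordinate, performing all $n$ substitutions successively gives
\begin{displaymath}
  [x_1,\dots,x_n]\le[r_1h,R_2h,r_3h,R_4h,\dots,b_nh],
\end{displaymath}
the last entry being $r_nh$ when $n$ is odd and $R_nh$ when $n$ is even, i.e.\ $b_nh$. Applying the homogeneity identity to the right-hand side yields $[x_1,\dots,x_n]\le[r_1,R_2,\dots,b_n]\,h$.

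The lower bound is obtained symmetrically: replacing each odd-indexed $x_k$ by $R_kh\ge x_k$ and each even-indexed $x_k$ by $r_kh\le x_k$ can only decrease the approximant, so $[x_1,\dots,x_n]\ge[R_1h,r_2h,R_3h,\dots,a_nh]=[R_1,r_2,\dots,a_n]\,h$ after one further application of the homogeneity identity.

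There is no real obstacle; the only points deserving care are a clean verification of the homogeneity identity (in particular, checking that the conventions $0h=\neutral$, $\infty h=\neutral^*$ make it hold with no restriction on the $b_i$, which is consistent with the effect of $\neutral$ and $\neutral^*$ recorded in the remark above) and the observation that Lemma~\ref{lemman:zn} supplies monotonicity in each argument with the others fixed — exactly what licenses changing the $x_k$ one by one.
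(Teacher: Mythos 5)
Your proof is correct and takes essentially the same route as the paper's: the paper's one-line induction that peels off the front term is just the coordinatewise monotonicity of Lemma~\ref{lemman:zn} which you invoke directly, combined with the homogeneity identity $[b_1h,\dots,b_nh]=[b_1,\dots,b_n]h$ that the paper leaves implicit. Your explicit verification of that identity, including the conventions $0h=\neutral$ and $\infty h=\neutral^*$ needed when some $r_k=0$ or $R_k=\infty$, is a welcome but not substantively different addition.
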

\begin{proof}
  It suffices to use the induction argument based on 
  \begin{align*}
    [R_1h,x_2,\dots,x_n]=
    (R_1h+[x_2,\dots,x_n])^*
    &\leq [x_1,\dots,x_n]\\
    &\leq (r_1h+[x_2,\dots,x_n])^*
    =[r_1h,x_2,\dots,x_n]
  \end{align*}
  for $n\geq1$.
\end{proof}

\section{Convergence results}
\label{sec:convergence-results}

The key technical condition used to deduce the convergence of
continued fractions in $\KK$ requires that
\begin{equation}
  \label{eq:xh}
  \rho_h(x^*,(x+th)^*)\leq t,\qquad t>0,
\end{equation}
if $h\leq x$.
A weaker variant of this condition is 
\begin{equation}
  \label{eq:xhc}
  \rho_h(x^*,(x+th)^*)\leq C_R^2t,\qquad t>0,
\end{equation}
if $h\leq x\leq Rh$, where $C_R$ is a positive finite function of
$R\in[1,\infty]$. Substituting $x=h$ shows that $C_R\geq C_1=1$. It is
immediate that the function $C_R$ can be chosen to be non-decreasing
and right-continuous. Let $C_\infty$ be the constant in the right-hand
side of \eqref{eq:xhc} that ensures the inequality for all $x\geq h$.
Note that $\rho_h(x^*,(x+th)^*)=\rho_h([x],[x,t^{-1}h])$.

\begin{lemma}
  \label{lemma:lip}
  If \eqref{eq:xhc} holds, then 
  \begin{displaymath}
    \rho_h(x^*,y^*)\leq C_{R/r}^2r^{-2}\rho_h(x,y)
  \end{displaymath}
  for all $x,y\in\KK$ such that $rh\leq x\leq Rh$ and $rh\leq y\leq
  Rh$.
\end{lemma}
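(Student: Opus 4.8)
The plan is to reduce the Lipschitz bound for the involution between two general elements $x,y$ to the one-point bound \eqref{eq:xhc}, by interpolating through a common upper bound and exploiting the scale-homogeneity of $\rho_h$ together with the identity $(ax)^*=a^{-1}x^*$.

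First I would set $t=\rho_h(x,y)$, so that by definition of $\rho_h$ we have $x\leq y+th$ and $y\leq x+th$. Combining $y\leq x+th$ with $x\leq Rh$ gives $y\leq (R+t)h$, and symmetrically $x\leq(R+t)h$; however, it is cleaner to work with the single element $w=x+th$, which satisfies $y\leq w$ and $x\leq w$, together with $rh\leq x\leq w$ and $w=x+th\leq (R+t)h$. Passing to polars reverses all inequalities: $w^*\leq x^*$, $w^*\leq y^*$. The idea is now to estimate $\rho_h(x^*,y^*)$ via the triangle inequality through $w^*$:
\begin{displaymath}
  \rho_h(x^*,y^*)\leq \rho_h(x^*,w^*)+\rho_h(w^*,y^*)\,.
\end{displaymath}

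Next I would bound each term using \eqref{eq:xhc}. For the first term, write $x=rh+(x-rh)$ — more precisely, note $r^{-1}x\geq h$ and $r^{-1}x\leq (R/r)h$, and $w=x+th=r\cdot(r^{-1}x+r^{-1}t\,h)$. Then by scale-homogeneity $\rho_h(x^*,w^*)=\rho_h(r^{-1}(r^{-1}x)^* ,\,r^{-1}(r^{-1}x+r^{-1}t\,h)^*)=r^{-1}\rho_h((r^{-1}x)^*,(r^{-1}x+r^{-1}t\,h)^*)$, and applying \eqref{eq:xhc} to the element $r^{-1}x$ (which lies between $h$ and $(R/r)h$) with parameter $r^{-1}t$ yields $\rho_h(x^*,w^*)\leq r^{-1}\cdot C_{R/r}^2\cdot r^{-1}t=C_{R/r}^2 r^{-2}t$. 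For the second term one would like the analogous bound, but $y\leq w$ does not immediately present $w$ as $y$ plus a multiple of $h$; here one instead uses that $y\leq w\leq y+th$ (since $w=x+th\leq y+th+th$ is too lossy — better: from $x\leq y+th$ we get $w=x+th\leq y+2th$, and from $y\leq w$ directly). The honest route is: since $y\leq w$ and $w\leq y+2th$ we can apply the same rescaled version of \eqref{eq:xhc} to the element $r^{-1}y$ with parameter $2r^{-1}t$, getting $\rho_h(y^*,w^*)\leq 2C_{R/r}^2 r^{-2}t$; summing gives $\rho_h(x^*,y^*)\leq 3C_{R/r}^2 r^{-2}t$, which is off by the constant $3$.

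To get the sharp constant, I would avoid the detour through $w$ and instead interpolate directly: set $u=x+th$ and $v=y+th$, so $y\leq u$ and $x\leq v$, and crucially $u$ and $v$ need not be compared — rather, I bound $\rho_h(x^*,u^*)\le C_{R/r}^2r^{-2}t$ and $\rho_h(y^*,v^*)\le C_{R/r}^2r^{-2}t$ exactly as above, and then observe that $y\le u$ gives $u^*\le y^*$ while $x\le v$ gives $v^*\le x^*$, and moreover $x^*\le u^*$... this still needs one more comparison. The cleanest fix, and the step I expect to be the main obstacle, is to show $\rho_h(x^*,y^*)\le C_{R/r}^2r^{-2}\,\rho_h(x,y)$ with constant exactly $1$ in front of $C_{R/r}^2r^{-2}$ by the following observation: from $x\le y+th$ and \eqref{eq:xhc} applied (after rescaling by $r^{-1}$) to $y$ we get $y^*\le (y+th)^*+C_{R/r}^2r^{-2}t\,h$; combined with $(y+th)^*\le x^*$ (from $x\le y+th$) this yields $y^*\le x^*+C_{R/r}^2r^{-2}t\,h$, and symmetrically $x^*\le y^*+C_{R/r}^2r^{-2}t\,h$. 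Hence $\rho_h(x^*,y^*)\le C_{R/r}^2r^{-2}t=C_{R/r}^2r^{-2}\rho_h(x,y)$, as claimed. The rescaling bookkeeping — verifying that $r^{-1}y$ indeed satisfies $h\le r^{-1}y\le (R/r)h$ so that \eqref{eq:xhc} legitimately applies with constant $C_{R/r}$, and tracking the two factors of $r^{-1}$ from scale-homogeneity and from the rescaled increment $r^{-1}t$ — is the only place where care is genuinely required.
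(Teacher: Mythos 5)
Your final argument is correct and is essentially the paper's own proof: the paper likewise bounds $\rho_h(x^*,y^*)$ by $\max\bigl(\rho_h(x^*,(x+th)^*),\rho_h(y^*,(y+th)^*)\bigr)$ using $(y+th)^*\leq x^*$ and $(x+th)^*\leq y^*$, and then applies \eqref{eq:xhc} to $r^{-1}x$ and $r^{-1}y$ with the same rescaling bookkeeping you describe. The earlier detours in your write-up (the triangle inequality through $w^*$, costing a factor of $3$) are correctly identified by you as lossy and can simply be deleted.
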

\begin{proof}
  Let $\rho_h(x,y)\leq t$. Then $x\leq y+t h$
  and $y\leq x+th$, so that
  \begin{displaymath}
    \rho_h(x^*,y^*)\leq 
    \max(\rho_h(x^*,(x+t h)^*),\rho_h(y^*,(y+t h)^*))\,.
  \end{displaymath}
  It suffices to note that \eqref{eq:xhc} and the scaling property of
  $\rho_h$ imply that
  \begin{displaymath}
    \rho_h(x^*,(x+th)^*)\leq C_{R/r}^2r^{-2}t,\qquad t>0,
  \end{displaymath}
  for all $x$ such that $rh\leq x\leq Rh$.  
\end{proof}

\begin{corollary}
  \label{cor:inv-continuous}
  If \eqref{eq:xhc} holds, then the involution operation is
  $\rho_h$-continuous on $\KK_h\cap\KK_h^*$ and, for $x,y\in\KK_h$,
  \begin{displaymath}
    \rho_h(x^*,y^*)\leq C_\infty^2 \max(\|x^*\|_h,\|y^*\|_h)^2\rho_h(x,y)\,.
  \end{displaymath}
\end{corollary}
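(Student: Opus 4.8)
The plan is to derive both assertions directly from Lemma~\ref{lemma:lip} by choosing the radii $r$ and $R$ appropriately for the pair $x,y$ at hand. The subtlety is that Lemma~\ref{lemma:lip} requires a \emph{lower} bound $rh\leq x$, whereas membership in $\KK_h$ only guarantees an \emph{upper} bound $x\leq Rh$; passing to polars converts one into the other, so the natural move is to apply the lemma to the polars $x^*,y^*$ rather than to $x,y$ themselves.

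First I would fix $x,y\in\KK_h\cap\KK_h^*$. Since $x^*,y^*\in\KK_h$, there is a finite $R$ with $x^*\leq Rh$ and $y^*\leq Rh$; taking polars and using $(ah)^*=a^{-1}h$ (and $h^*=h$) gives $R^{-1}h\leq x$ and $R^{-1}h\leq y$. Thus $x,y$ satisfy the two-sided bound $R^{-1}h\leq x$ and, if additionally $x\in\KK_h$, $x\leq R'h$ for some finite $R'$, and similarly for $y$; enlarging to a common pair $r=R^{-1}$, $\tilde R=\max(R',\dots)$ we get $rh\leq x,y\leq \tilde Rh$. Lemma~\ref{lemma:lip} then yields $\rho_h(x^*,y^*)\leq C_{\tilde R/r}^2 r^{-2}\rho_h(x,y)<\infty$, and letting $\rho_h(x_n,x)\to0$ with all terms eventually trapped between two such multiples of $h$ (which holds because $\rho_h(x_n,x)\to0$ forces $x_n\leq x+t_nh\leq (\tilde R+1)h$ and, via the closedness of the order from Lemma~\ref{lemma:order}, a uniform lower bound as well) gives $\rho_h(x_n^*,x^*)\to0$. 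This proves $\rho_h$-continuity of the involution on $\KK_h\cap\KK_h^*$.

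For the quantitative estimate with $x,y\in\KK_h$, I would instead apply Lemma~\ref{lemma:lip} with the roles of $x$ and $x^*$ swapped: write $u=x^*$, $v=y^*$, so that the desired inequality reads $\rho_h(u,v)\leq C_\infty^2\max(\|u\|_h,\|v\|_h)^2\rho_h(u^*,v^*)$. Set $r_0=\min$ of the reciprocals of $\|u\|_h$-type quantities — more precisely, from $u\leq \|u\|_h\,h$ and $v\leq\|v\|_h\,h$ one gets, on taking polars, $\|u\|_h^{-1}h\leq u^*$ and $\|v\|_h^{-1}h\leq v^*$, hence $r h\leq u^*,v^*$ with $r=\max(\|u\|_h,\|v\|_h)^{-1}$, while there is no finite upper bound available, so one uses $R=\infty$ and the constant $C_\infty$. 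Lemma~\ref{lemma:lip} (in the form $\rho_h((u^*)^*,(v^*)^*)\le C_{R/r}^2 r^{-2}\rho_h(u^*,v^*)$, i.e. applied to the pair $u^*,v^*$) gives $\rho_h(u,v)\leq C_\infty^2 r^{-2}\rho_h(u^*,v^*)=C_\infty^2\max(\|u\|_h,\|v\|_h)^2\rho_h(x,y)$, which is exactly the claimed bound after substituting $u=x^*$, $v=y^*$.

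The main obstacle I anticipate is purely bookkeeping rather than conceptual: one must be careful that $C_{R/r}$ is evaluated at the right argument (here $R/r$ with $R=\infty$, giving $C_\infty$), that the degenerate cases $\|u\|_h=0$ or $\|u\|_h=\infty$ are handled (if either norm is $0$ then $u$ or $v$ equals $\neutral$ and $u^*=\neutral^*$ dominates everything, making both sides behave consistently; if either is $\infty$ the right-hand side is $\infty$ and there is nothing to prove), and that the lower bounds obtained by polarity are genuinely usable in Lemma~\ref{lemma:lip}, which they are since $(ah)^*=a^{-1}h$ and $h^*=h$ were assumed. No new idea beyond Lemma~\ref{lemma:lip} and the scaling law of the involution is needed.
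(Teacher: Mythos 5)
Your derivation of the displayed inequality is correct and is exactly the intended one: the paper offers no written proof, treating the corollary as immediate from Lemma~\ref{lemma:lip}, and the route you take --- from $x^*\leq\|x^*\|_h\,h$ deduce $x\geq \|x^*\|_h^{-1}h$ by polarity, then apply Lemma~\ref{lemma:lip} with $r=\max(\|x^*\|_h,\|y^*\|_h)^{-1}$ and $R=\infty$ --- is that argument. (The ``swap $x\leftrightarrow x^*$'' framing is only a relabelling of the same application, not a different one; also note that $x^*\leq\|x^*\|_h\,h$ itself uses \eqref{eq:3}, or else one inserts an $\eps$ in $r$ and lets it tend to $0$.) Your handling of the degenerate values of the norms is fine.

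The continuity argument, however, has a genuine gap at the step where you claim a \emph{uniform lower bound} $x_n\geq r'h$ ``via the closedness of the order from Lemma~\ref{lemma:order}''. Closedness of the order lets you pass inequalities to limits; it does not let you cancel. From $x\in\KK_h^*$ you get $x\geq ah$ for some $a>0$, and from $\rho_h(x_n,x)\leq t_n$ you get $ah\leq x\leq x_n+t_nh$; but in a general partially ordered semigroup, where no cancellation law is assumed, this does not yield $x_n\geq (a-t_n)h$. The same issue blocks the alternative of deducing continuity from the inequality you just proved: that requires $\|x_n^*\|_h$ to stay bounded along the sequence, which is precisely the missing uniform lower bound on $x_n$. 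Only one half of the two-sided estimate survives without it: from $x_n\leq x+t_nh$ and $x\geq ah$ one gets, via \eqref{eq:xhc}, $x^*\leq (x+t_nh)^*+C_\infty^2a^{-2}t_nh\leq x_n^*+C_\infty^2a^{-2}t_nh$; the reverse inequality $x_n^*\leq x^*+\eps_nh$ genuinely needs $x_n\geq r'h$ with $r'$ independent of $n$. In the paper's two applications this is rescued by the order cancellation property of the concrete semigroups (for convex bodies, $rB\subset K_n+t_nB$ does imply $(r-t_n)B\subset K_n$; for convex functions one subtracts pointwise), so you should either invoke such a cancellation property explicitly or restrict the continuity claim to subsets on which $\|\cdot^*\|_h$ is bounded, where it follows at once from the quantitative bound.
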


The following result covers the general case of continued fractions
with variable terms.

\begin{theorem}
  \label{thr:subk}
  Assume that $(\KK_h,\rho_h)$ is complete and condition
  \eqref{eq:xhc} holds. Assume that there exist $k\geq1$ and $0<a\leq
  b\leq \infty$ such that $C_{b/a}<a$ and
  \begin{align}
    \label{eq:xmk}
    x_n+[x_{n+1},\dots,x_{n+2k}]\geq ah\,,\\
    \label{eq:xmk2}
    x_n+[x_{n+1},\dots,x_{n+2k-1}]\leq bh\,,
  \end{align}
  for all $n\geq1$.  If, for some $l\geq 0$,
  \begin{equation}
    \label{eq:limsupm}
    \limsup_{n\to\infty} 
    \rho_h([x_{n-2k-l+1},\dots,x_n],[x_{n-2k-l+1},\dots,x_{n+1}])<\infty\,,
  \end{equation}
  then the continued fraction $z_n=[x_1,\dots,x_n]$ converges to
  $z\in\KK_h$.
\end{theorem}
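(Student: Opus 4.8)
The plan is to prove that $\{z_N\}$ is a Cauchy sequence in the complete metric space $(\KK_h,\rho_h)$, by bounding the consecutive distances $\rho_h(z_N,z_{N+1})$ geometrically. Write $t^{(j)}_N=[x_j,\dots,x_N]$ for the tail approximants, so that $z_N=t^{(1)}_N$ and $t^{(j)}_N=(x_j+t^{(j+1)}_N)^*$. Since the order is compatible with addition one has $\rho_h(x_j+u,x_j+v)\le\rho_h(u,v)$, and hence Lemma~\ref{lemma:lip} gives, whenever $rh\le x_j+t^{(j+1)}_N\le Rh$ and $rh\le x_j+t^{(j+1)}_{N+1}\le Rh$,
\[
  \rho_h\bigl(t^{(j)}_N,t^{(j)}_{N+1}\bigr)\le C_{R/r}^2 r^{-2}\,\rho_h\bigl(t^{(j+1)}_N,t^{(j+1)}_{N+1}\bigr).
\]
Iterating this inequality from $j=1$ up to $j=N-2k-l$ peels off the first $N-2k-l$ terms and leaves $\rho_h\bigl([x_{N-2k-l+1},\dots,x_N],[x_{N-2k-l+1},\dots,x_{N+1}]\bigr)$, whose $\limsup$ is finite by~\eqref{eq:limsupm}; so everything hinges on controlling the contraction factors $C_{R/r}^2 r^{-2}$.

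To this end I must check that for every $j$ with $1\le j\le N-2k-l$ one may take $r=a$ and $R=b$. The preliminary fact needed is the interlacing of approximants with respect to their number of terms: appending one term to a continued fraction amounts to replacing the innermost $y^*$ by $(y+z^*)^*\le y^*$, and this single inequality propagates up through the remaining operations $w\mapsto (x_i+w)^*$, flipping at each step; combined with Lemma~\ref{lemman:zn} this shows that, for any sequence, the approximants with an even number of terms increase with that number, those with an odd number of terms decrease, and every even one lies below every odd one. Applying this to the sequence $x_{j+1},x_{j+2},\dots$ and using that $[x_{j+1},\dots,x_N]$ has $N-j\ge 2k$ terms, I obtain
\[
  [x_{j+1},\dots,x_{j+2k}]\le [x_{j+1},\dots,x_N]\le [x_{j+1},\dots,x_{j+2k-1}];
\]
adding $x_j$ and invoking \eqref{eq:xmk} and \eqref{eq:xmk2} (both with $n=j$) yields $ah\le x_j+[x_{j+1},\dots,x_N]\le bh$. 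The same computation with $N+1$ in place of $N$ works since then the tail has $N+1-j\ge 2k+1$ terms.

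Therefore each peeling factor equals $C_{b/a}^2 a^{-2}=:q$, and $q<1$ because $C_{b/a}<a$. Consequently
\[
  \rho_h(z_N,z_{N+1})\le q^{\,N-2k-l}\,\rho_h\bigl([x_{N-2k-l+1},\dots,x_N],[x_{N-2k-l+1},\dots,x_{N+1}]\bigr),
\]
and by~\eqref{eq:limsupm} the last factor stays bounded as $N\to\infty$, so $\sum_N\rho_h(z_N,z_{N+1})<\infty$ and $\{z_N\}$ is Cauchy. Moreover, for $N\ge 2k+1$ the same sandwiching with $j=1$ gives $x_1+[x_2,\dots,x_N]\ge ah$, hence $z_N=(x_1+[x_2,\dots,x_N])^*\le (ah)^*=a^{-1}h$, so the tail of $\{z_N\}$ lies in $\KK_h$; completeness of $(\KK_h,\rho_h)$ then delivers a limit $z\in\KK_h$. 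I expect the delicate point to be the interlacing-in-length step together with the parity bookkeeping that lets one use the uniform bounds $r=a$, $R=b$ for all $j$ simultaneously; granted that, the contraction estimate and the summation are routine.
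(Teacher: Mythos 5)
Your proof is correct and follows essentially the same route as the paper: sandwich $x_j+[x_{j+1},\dots,x_N]$ between $ah$ and $bh$ using the hypotheses and the monotonicity of approximants, peel off the leading terms one at a time via Lemma~\ref{lemma:lip} to get the geometric factor $C_{b/a}^2a^{-2}<1$, and conclude by summability and completeness. The only cosmetic difference is that the paper obtains the sandwich by substituting $\neutral^*$ for the term in position $2k+1$ (resp.\ $2k$) to truncate the tail, whereas you derive it from the interlacing of approximants in their number of terms; both rest on Lemma~\ref{lemman:zn} and yield the same bounds.
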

\begin{proof}
  By Lemma~\ref{lemman:zn}, for all $m\geq1$ and $n\geq 2k+m$,
  \begin{align*}
    x_m+[x_{m+1},\dots,x_{m+2k},x_{m+2k+1},\dots,x_n]
    &\geq x_m+[x_{m+1},\dots,x_{m+2k},\neutral^*,\dots,x_n]\\
    &=x_m+[x_{m+1},\dots,x_{m+2k}]\geq ah\,,
  \end{align*}
  and 
  \begin{align*}
    x_m+[x_{m+1},\dots,x_{m+2k},x_{m+2k+1},\dots,x_n]
    &\leq x_m+[x_{m+1},\dots,x_{m+2k-1},\neutral^*,\dots,x_n]\\
    &=x_m+[x_{m+1},\dots,x_{m+2k-1}]\leq bh\,.
  \end{align*}
  In particular, $z_n\leq a^{-1}h$ and so $z_n\in\KK_h$ for
  sufficiently large $n$.  By Lemma~\ref{lemma:lip}, for $n\geq 2k+m$,
  \begin{align*}
    \rho_h([x_m,\dots,x_n],[x_m,\dots,x_{n+1}])
    &=\rho_h((x_m+[x_{m+1},\dots,x_n])^*,(x_m+[x_{m+1},\dots,x_{n+1}])^*)\\
    &\leq C_{b/a}^2 a^{-2} \rho_h([x_{m+1},\dots,x_n],[x_{m+1},\dots,x_{n+1}])\,.
  \end{align*}
  By iterating this argument for $m=1,\dots,n-2k-l$, we arrive at
  \begin{align*}
    \rho_h(z_n,z_{n+1})
    &\leq (a^{-1}C_{b/a})^{2(n-2k-l)}
    \rho_h([x_{n-2k-l+1},\dots,x_n],[x_{n-2k-l+1},\dots,x_{n+1}])\\
    &\leq (a^{-1}C_{b/a})^{2(n-2k-l)} c
  \end{align*}
  for all sufficiently large $n$ and some finite $c$ that dominates
  the upper limit in \eqref{eq:limsupm}.  Since the series
  $\sum\rho_h(z_n,z_{n+1})$ converges, the sequence $\{z_n\}$ is
  fundamental and its convergence follows from the completeness
  assumption.
\end{proof}

\begin{remark}
  \label{rem:xmk}
  In Theorem~\ref{thr:subk} it suffices to impose only \eqref{eq:xmk}
  with $a>C_\infty$, which becomes $a>1$ if \eqref{eq:xh} holds.
\end{remark}

\begin{corollary}
  \label{cor:epsilon-in}
  Assume that $(\KK_h,\rho_h)$ is complete and condition
  \eqref{eq:xhc} holds. Furthermore, assume that there exist $k\geq1$
  and $0<a\leq b\leq \infty$ such that $q=a^{-1}C_{b/a}<1$,
  \eqref{eq:xmk} and \eqref{eq:xmk2} hold, and there exists $r>0$ such
  that $x_n\geq rh$ for all $n\geq1$. Then the continued fraction
  $z_n=[x_1,\dots,x_n]$ converges to $z\in\KK_h$ and
  \begin{equation}
    \label{eq:2}
    \rho_h(z_n,z)\leq \frac{q^{2(n-2k)}}{1-q^2}r^{-1}\,.
  \end{equation}  
\end{corollary}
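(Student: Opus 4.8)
The plan is to derive the statement from Theorem~\ref{thr:subk} together with a quantitative reading of its proof; the only new ingredient compared with that theorem is the uniform lower bound $x_n\geq rh$, which controls the ``short'' residual continued fractions and thereby produces the explicit rate.

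First I would check that Theorem~\ref{thr:subk} applies with $l=0$. The condition $q<1$ is exactly $C_{b/a}<a$, and \eqref{eq:xmk}, \eqref{eq:xmk2} are assumed, so only \eqref{eq:limsupm} needs verification. Here the key observation is that any approximant starting with a term $x_m$ satisfies $[x_m,x_{m+1},\dots]=(x_m+[\,\cdot\,])^*$ with $[\,\cdot\,]\geq\neutral$, hence $x_m+[\,\cdot\,]\geq x_m\geq rh$ and therefore $[x_m,x_{m+1},\dots]\leq (rh)^*=r^{-1}h$. Moreover any two elements $u,v\leq r^{-1}h$ satisfy $u\leq \neutral+r^{-1}h\leq v+r^{-1}h$ and symmetrically, so $\rho_h(u,v)\leq r^{-1}$. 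Consequently
\[
  \rho_h([x_{n-2k+1},\dots,x_n],[x_{n-2k+1},\dots,x_{n+1}])\leq r^{-1}
\]
for every $n\geq 2k$, which gives \eqref{eq:limsupm} with $l=0$. Thus Theorem~\ref{thr:subk} yields that $z_n$ converges to some $z\in\KK_h$.

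For the rate I would retrace the iteration in the proof of Theorem~\ref{thr:subk}. For $n\geq 2k+m$ the bounds $ah\leq x_m+[x_{m+1},\dots,x_n]\leq bh$ and $ah\leq x_m+[x_{m+1},\dots,x_{n+1}]\leq bh$ established there, combined with Lemma~\ref{lemma:lip}, give
\[
  \rho_h([x_m,\dots,x_n],[x_m,\dots,x_{n+1}])\leq q^{2}\,\rho_h([x_{m+1},\dots,x_n],[x_{m+1},\dots,x_{n+1}]).
\]
Iterating over $m=1,\dots,n-2k$ and inserting the bound $r^{-1}$ for the base term obtained above yields $\rho_h(z_n,z_{n+1})\leq q^{2(n-2k)}r^{-1}$ for all $n\geq 2k$. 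Summing the geometric series, for $n\geq 2k$,
\[
  \rho_h(z_n,z)\leq\sum_{j\geq n}\rho_h(z_j,z_{j+1})\leq r^{-1}\sum_{j\geq n}q^{2(j-2k)}=\frac{q^{2(n-2k)}}{1-q^2}\,r^{-1},
\]
which is \eqref{eq:2}. For $1\leq n<2k$ the right-hand side of \eqref{eq:2} is at least $r^{-1}$, while every $z_n$ and the limit $z$ lie below $r^{-1}h$ (for $z$ by the closedness of the order, Lemma~\ref{lemma:order}), so $\rho_h(z_n,z)\leq r^{-1}$ there and \eqref{eq:2} holds for all $n\geq1$.

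I do not expect a genuine obstacle: the corollary is a quantitative repackaging of Theorem~\ref{thr:subk}. The only place requiring care is the base term of the iteration — one must control the fixed-length residual fraction $[x_{n-2k+1},\dots,x_n]$ uniformly in $n$ — and the hypothesis $x_n\geq rh$ is precisely what converts the qualitative condition \eqref{eq:limsupm} into the clean constant $r^{-1}$, hence into the explicit geometric bound.
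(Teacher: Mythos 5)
Your proof is correct and follows essentially the same route as the paper: run the contraction iteration from the proof of Theorem~\ref{thr:subk} with $l=0$, bound the residual term by $r^{-1}$ using $x_n\geq rh$, and sum the geometric series. The only (harmless) local difference is in that residual bound: you observe that both short approximants lie between $\neutral$ and $r^{-1}h$, hence are within $r^{-1}$ of each other, whereas the paper sandwiches $[x_1,\dots,x_{n+1}]$ between $[x_1,\dots,x_n,\neutral]$ and $[x_1,\dots,x_n,\neutral^*]$ and telescopes down to $\rho_h(\neutral,[x_1])\leq r^{-1}$ — both give the same constant.
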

\begin{proof}
  It suffices to show that the upper limit in \eqref{eq:limsupm} for
  $l=0$ is bounded by $r^{-1}$.  Note that $[x_1,\dots,x_{n+1}]$
  lies between $[x_1,\dots,x_n,\neutral]$ and
  $[x_1,\dots,x_n,\neutral^*]$, whence
  \begin{align*}
    \rho_h([x_1,\dots,x_n]&,[x_1,\dots,x_{n+1}])\\
    &\leq
    \max(\rho_h([x_1,\dots,x_n],[x_1,\dots,x_n,\neutral]),
    \rho_h([x_1,\dots,x_n],[x_1,\dots,x_n,\neutral^*])\\
    &=\max(\rho_h([x_1,\dots,x_n],[x_1,\dots,x_{n-1}]),
    \rho_h([x_1,\dots,x_n],[x_1,\dots,x_n])\\
    &=\rho_h([x_1,\dots,x_n],[x_1,\dots,x_{n-1}])\,.
  \end{align*}
  By iterating this argument, we obtain
  \begin{displaymath}
    \rho_h([x_1,\dots,x_n],[x_1,\dots,x_{n+1}])\leq
    \rho_h(\neutral,[x_1])
    \leq r^{-1}\,.
  \end{displaymath}
  Therefore, $\rho(z_n,z_{n+1})\leq q^{2(n-2k)}r^{-1}$, whence
  \begin{displaymath}
    \rho_h(z_n,z_m)\leq \frac{q^{2(n-2k)}}{1-q^2}r^{-1}
  \end{displaymath}
  for all $m\geq n$ and it suffices to let $m\to\infty$. 
\end{proof}

The subsequent result relies on bounding the terms of the continued
fraction by $rh$ from below and $Rh$ from above.  In the following
denote
\begin{equation}
  \label{eq:upsilon}
  \upsilon(r,R)=[r,R,r,R,\dots]^{-1}=\thf (\sqrt{r^2+4r/R}+r)\,,
  \qquad r,R\in (0,\infty]\,,
\end{equation}
where the value of the numerical periodic continued fraction is found
by solving the recursive equation. 

\begin{theorem}
  \label{thr:uniform}
  Assume that $(\KK_h,\rho_h)$ is complete and condition
  \eqref{eq:xhc} holds. If $rh\leq x_n\leq Rh$ for all $n\geq1$ and
  $0<r\leq R\leq\infty$ such that
  \begin{equation}
    \label{eq:urr}
    \frac{C_{\upsilon(R,r)/\upsilon(r,R)}}{\upsilon(r,R)}<q<1\,,
  \end{equation}
  then the continued fraction $z_n=[x_1,\dots,x_n]$ converges to
  $z\in\KK_h$ and \eqref{eq:2} holds.
\end{theorem}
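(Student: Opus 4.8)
The plan is to deduce this from Corollary~\ref{cor:epsilon-in} (or Theorem~\ref{thr:subk}) by verifying its hypotheses with a suitable choice of $a$, $b$, and $k$. The natural candidates are $a=\upsilon(r,R)$ and $b=\upsilon(R,r)$, so that the contraction condition $q=a^{-1}C_{b/a}<1$ of the corollary becomes exactly \eqref{eq:urr}. The lower bound $x_n\geq rh$ gives the required $x_n\geq rh$ hypothesis of Corollary~\ref{cor:epsilon-in} directly (with the same $r$), so the only real work is to establish the two-sided sandwich conditions \eqref{eq:xmk} and \eqref{eq:xmk2}, i.e.\ that $x_n+[x_{n+1},\dots,x_{n+2k}]\geq \upsilon(r,R)h$ and $x_n+[x_{n+1},\dots,x_{n+2k-1}]\leq \upsilon(R,r)h$ for all $n$, for some fixed $k\geq1$.

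First I would apply Lemma~\ref{lem:bound} to the tail $[x_{n+1},\dots,x_{n+2k}]$: since $rh\leq x_j\leq Rh$ for every $j$, the lemma bounds this tail below by $[b_1,r,R,r,\dots]h$ and above by $[b_1',R,r,R,\dots]h$, where the first entry $b_1,b_1'$ is $R$ or $r$ according to parity and the pattern then alternates. Adding $x_n\geq rh$ (resp.\ $x_n\leq Rh$) and using that $[r,R,r,R,\dots]$-type numerical continued fractions of odd/even length interlace around the periodic value $\upsilon(r,R)^{-1}$ (resp.\ $\upsilon(R,r)^{-1}$), the partial-fraction bound produces a quantity of the form $r+[\text{alternating length }2k]$ on the low side and $R+[\text{alternating length }2k-1]$ on the high side. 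Because the numerical periodic continued fractions $[r,R,r,R,\dots]$ converge monotonically (even-indexed approximants increasing, odd-indexed decreasing, by Lemma~\ref{lemman:zn} applied to the scalar case), for $k$ large enough the $2k$-term lower approximant and the $(2k-1)$-term upper approximant are as close as desired to $\upsilon(r,R)$ and $\upsilon(R,r)$ respectively; in fact one checks that the relevant inequality $r+[r,R,\dots]\geq\upsilon(r,R)$ and its counterpart hold already for all $k\geq1$ by the fixed-point characterisation in \eqref{eq:upsilon}, so one may even take $k=1$. This is the step I expect to require the most care: getting the parities straight in Lemma~\ref{lem:bound} and confirming that the finite-length numerical continued fractions do lie on the correct side of the periodic limiting values $\upsilon(r,R)$ and $\upsilon(R,r)$.

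Once \eqref{eq:xmk} and \eqref{eq:xmk2} are verified with $a=\upsilon(r,R)$ and $b=\upsilon(R,r)$, the hypotheses of Corollary~\ref{cor:epsilon-in} are met: $(\KK_h,\rho_h)$ is complete by assumption, \eqref{eq:xhc} holds, $a^{-1}C_{b/a}=C_{\upsilon(R,r)/\upsilon(r,R)}/\upsilon(r,R)<q<1$ is precisely \eqref{eq:urr}, and $x_n\geq rh$ for all $n$. The corollary then yields convergence of $z_n=[x_1,\dots,x_n]$ to some $z\in\KK_h$ together with the error estimate \eqref{eq:2}, which is exactly the conclusion claimed. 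I would close by noting that the case $R=\infty$ is covered since $\upsilon(r,R)$ and the bounds in Lemma~\ref{lem:bound} remain meaningful with the conventions $0h=\neutral$, $\infty h=\neutral^*$ already adopted, and $C_{b/a}$ is finite for $b/a\in[1,\infty]$ by the standing assumption on $C_R$.
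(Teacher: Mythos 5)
Your overall strategy is the paper's: reduce to Corollary~\ref{cor:epsilon-in} by using Lemma~\ref{lem:bound} to verify \eqref{eq:xmk} and \eqref{eq:xmk2}. But there is a genuine error in the key step. You claim that the sandwich conditions hold with $a=\upsilon(r,R)$ and $b=\upsilon(R,r)$ exactly, ``already for all $k\geq1$, so one may even take $k=1$.'' This is false, and for the reason you yourself flag as needing care: the parities. By Lemma~\ref{lemman:zn} the even-length numerical approximants \emph{increase} to their limit, so the lower bound $r+[R,r,\dots,R,r]$ ($2k$ terms in the bracket) approaches $\upsilon(r,R)$ \emph{from below}; e.g.\ for $k=1$ one gets $r+\tfrac{r}{Rr+1}$, and since $\upsilon(R,r)<R+\tfrac1r$ this is strictly less than $r+\upsilon(R,r)^{-1}=\upsilon(r,R)$. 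Symmetrically, the odd-length upper bound $R+[r,R,\dots,r]$ decreases to $\upsilon(R,r)$ from above. So for every finite $k$ the best achievable constants satisfy $a<\upsilon(r,R)$ and $b>\upsilon(R,r)$, and the fixed-point identity for $\upsilon$ cannot rescue the exact inequalities.

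The correct completion — which the paper uses and which is implicit in the half of your argument you then abandon — is to fix $\eps>0$, take $k$ large enough that \eqref{eq:xmk} and \eqref{eq:xmk2} hold with $a=\upsilon(r,R)-\eps$ and $b=\upsilon(R,r)+\eps$, and then deduce $C_{b/a}<a$ from the strict inequality \eqref{eq:urr}. That last deduction is not automatic: since $b/a>\upsilon(R,r)/\upsilon(r,R)$, you need the monotonicity and right-continuity of $R\mapsto C_R$ (noted after \eqref{eq:xhc}) to guarantee that $C_{b/a}$ is still below $a$ for $\eps$ small. Your write-up omits this continuity step entirely, and it is exactly the point where the strictness of \eqref{eq:urr} is consumed.
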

\begin{proof}
  By Lemma~\ref{lem:bound}, 
  \begin{displaymath}
    x_n+[x_{n+1},\dots,x_{n+2k}]\geq (r+[R,r,\dots,R,r])h\to \upsilon(r,R)h
    \quad\text{as }\; k\to\infty,
  \end{displaymath}
  and 
  \begin{displaymath}
    x_n+[x_{n+1},\dots,x_{n+2k-1}]\leq (R+[r,R,\dots,r])h\to \upsilon(R,r)h
    \quad\text{as }\; k\to\infty\,.
  \end{displaymath}
  Fix any $\eps>0$.  Then \eqref{eq:xmk} and \eqref{eq:xmk2} hold for
  sufficiently large $k$ with $a=\upsilon(r,R)-\eps$ and
  $b=\upsilon(R,r)+\eps$. Finally $C_{b/a}<a$ follows from
  \eqref{eq:urr} taking into account that the function $C_R$ is
  right-continuous. The convergence and the bound follow from
  Corollary~\ref{cor:epsilon-in}.
\end{proof}

\begin{corollary}
  \label{cor:uniform-simple}
  Assume that \eqref{eq:xh} holds and $(\KK_h,\rho_h)$ is a complete
  metric space.  Assume that $rh\leq x_n\leq Rh$ for all $n\geq1$ with
  $R\in[r,\infty]$, and either (i) $r>1$
  or (ii) $r=1$ and $R<\infty$ or (iii) $r<1$ and $R\leq
  r/(1-r)$. Then $z_n=[x_1,\dots,x_n]$ converges in the metric
  $\rho_h$ to $z\in \KK_h$ and \eqref{eq:2} holds with any $q>
  \upsilon(r,R)$.
\end{corollary}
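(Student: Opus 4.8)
The plan is to obtain this as a direct specialisation of Theorem~\ref{thr:uniform}. Condition \eqref{eq:xh} is exactly condition \eqref{eq:xhc} with $C_R\equiv1$ for every $R\in[1,\infty]$, so in particular $C_{\upsilon(R,r)/\upsilon(r,R)}=1$ and the hypothesis \eqref{eq:urr} collapses to $\upsilon(r,R)^{-1}<q<1$. Such a $q$ exists precisely when $\upsilon(r,R)>1$, and then Theorem~\ref{thr:uniform} gives at once the $\rho_h$-convergence of $z_n=[x_1,\dots,x_n]$ to some $z\in\KK_h$ together with the estimate \eqref{eq:2} for any such $q$. Thus the whole statement reduces to the elementary numerical claim that $\upsilon(r,R)>1$ in each of the cases (i)--(iii).

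To verify this I would use the closed form $\upsilon(r,R)=\thf(\sqrt{r^2+4r/R}+r)$ from \eqref{eq:upsilon}. It is non-increasing in $R$ (since $r>0$, with the convention $4r/R=0$ for $R=\infty$), and $\upsilon(r,R)>1$ is equivalent to $\sqrt{r^2+4r/R}>2-r$. When $r\ge1$ the right-hand side is $\le1\le r\le\sqrt{r^2+4r/R}$, so $\upsilon(r,R)\ge1$; in case (i) the strict inequality $r>1$ forces $\upsilon(r,R)\ge\upsilon(r,\infty)=r>1$, and in case (ii) ($r=1$, $R<\infty$) one has $\upsilon(1,R)=\thf(1+\sqrt{1+4/R})>1$ because $4/R>0$. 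For $r<1$, squaring turns $\upsilon(r,R)>1$ into $R<r/(1-r)$, while monotonicity in $R$ gives $\upsilon(r,R)\ge\upsilon\bigl(r,r/(1-r)\bigr)=\thf\bigl(r+\sqrt{(r-2)^2}\bigr)=1$; hence for $R<r/(1-r)$ the inequality $\upsilon(r,R)>1$ is strict, Theorem~\ref{thr:uniform} applies, and the constant $k$ it produces is carried into \eqref{eq:2} exactly as there, with $q$ taken anywhere in $(\upsilon(r,R)^{-1},1)$.

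The one point that I expect to need extra care is the boundary of case (iii), $R=r/(1-r)$, where the computation above yields $\upsilon(r,R)=1$ rather than $>1$; there \eqref{eq:urr} cannot be met and Theorem~\ref{thr:uniform} does not apply verbatim (and, correspondingly, the geometric bound \eqref{eq:2} should only be claimed for $R<r/(1-r)$). I would treat this critical case separately: Lemma~\ref{lem:bound} squeezes $z_n$ between the scaled numerical continued fractions built from the alternating sequences $R,r,R,r,\dots$ and $r,R,r,R,\dots$, both of which converge by the Seidel--Stern criterion, but their numerical limits differ, so the squeeze must be supplemented by the monotonicity of the even and odd subsequences (Lemma~\ref{lemman:zn}) and by completeness of $(\KK_h,\rho_h)$ to force $\{z_n\}$ to be fundamental. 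Making this last step rigorous in the abstract setting — where only metric, not order, completeness is assumed — is the genuine difficulty; if instead the intended reading of (iii) is the strict inequality $R<r/(1-r)$ (as in the analogous statement for convex sets), the difficulty disappears and the corollary is immediate from Theorem~\ref{thr:uniform}.
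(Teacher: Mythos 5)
Your derivation is exactly the intended one: the paper states this corollary without a separate proof, as a direct specialisation of Theorem~\ref{thr:uniform}, and under \eqref{eq:xh} one indeed has $C_R\equiv 1$, so \eqref{eq:urr} collapses to $\upsilon(r,R)>1$, which your case analysis for (i)--(iii) verifies correctly (you also silently fix the slip in the statement, which should read $q>\upsilon(r,R)^{-1}$ rather than $q>\upsilon(r,R)$, since \eqref{eq:2} needs $q<1$). Your concern about the boundary of case (iii) is legitimate and not a defect of your argument: at $R=r/(1-r)$ one has $\upsilon(r,R)=1$, so \eqref{eq:urr} fails for every $q<1$, Theorem~\ref{thr:uniform} does not apply, and your proposed squeeze cannot close the gap because the two bounding numerical continued fractions $[r,R,r,\dots]$ and $[R,r,R,\dots]$ have different limits; the non-strict inequality in (iii) appears to be an oversight in the corollary's statement, and the strict inequality $R<r/(1-r)$ --- as in the set-valued analogue, Theorem~\ref{thr:constant}(iii) --- is the correct hypothesis.
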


Below we present another convergence condition that handles the case
when $x_n\geq r_nh$ with $\inf r_n=1$.

\begin{theorem}
  \label{thr:non-constant-R-r}
  Assume that \eqref{eq:xhc} holds and $(\KK_h,\rho_h)$ is complete.
  Let $\{x_n,n\geq1\}$ be a sequence of elements from $\KK$ such that
  $r_nh\leq x_n\leq R_nh$, $n\geq1$, where $R_n\in[r_n,\infty]$. If
  \begin{equation}
    \label{eq:limr}
    \liminf_{n\to \infty} n\log(r_nr_{n+1}C_{(R_n+r_{n+1}^{-1})/r_n}^{-2})>1,
  \end{equation}
  then the continued fraction $z_n=[x_1,\dots,x_n]$ converges.
\end{theorem}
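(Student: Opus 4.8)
The plan is to bound $\rho_h(z_n,z_{n+1})$ by iterating the Lipschitz estimate of Lemma~\ref{lemma:lip} down the continued fraction, just as in the proof of Theorem~\ref{thr:subk}, but now carrying along the $n$‑dependent sandwiching constants. The key observation is that for every $k\ge m+1$ one has $[x_{m+1},\dots,x_k]=(x_{m+1}+[x_{m+2},\dots,x_k])^{*}\le x_{m+1}^{*}\le r_{m+1}^{-1}h$, so that the element $u=x_m+[x_{m+1},\dots,x_k]$ satisfies $r_m h\le u\le (R_m+r_{m+1}^{-1})h$. Applying Lemma~\ref{lemma:lip} with these bounds to $u=x_m+[x_{m+1},\dots,x_n]$ and $v=x_m+[x_{m+1},\dots,x_{n+1}]$, together with the contraction $\rho_h(x_m+a,x_m+b)\le\rho_h(a,b)$ coming from compatibility of the order with addition, yields
\[
\rho_h([x_m,\dots,x_n],[x_m,\dots,x_{n+1}])\le \alpha_m\,\rho_h([x_{m+1},\dots,x_n],[x_{m+1},\dots,x_{n+1}]),\qquad \alpha_m=r_m^{-2}C_{(R_m+r_{m+1}^{-1})/r_m}^{2}.
\]

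Iterating this for $m=1,\dots,n-1$ reduces matters to $\rho_h([x_n],[x_n,x_{n+1}])=\rho_h(x_n^{*},(x_n+x_{n+1}^{*})^{*})$, which is at most $r_n^{-1}$ since both $x_n^{*}$ and $(x_n+x_{n+1}^{*})^{*}$ are $\le r_n^{-1}h$. Hence $\rho_h(z_n,z_{n+1})\le r_n^{-1}\prod_{m=1}^{n-1}\alpha_m$. Writing $\gamma_m=r_mr_{m+1}C_{(R_m+r_{m+1}^{-1})/r_m}^{-2}$ for the quantity inside the logarithm in \eqref{eq:limr}, one has $\alpha_m=r_{m+1}/(r_m\gamma_m)$, so the product telescopes against the leading $r_n^{-1}$:
\[
\rho_h(z_n,z_{n+1})\le \frac{1}{r_1\prod_{m=1}^{n-1}\gamma_m}=\frac{1}{r_1}\exp\Bigl(-\sum_{m=1}^{n-1}\log\gamma_m\Bigr).
\]

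It then remains to convert \eqref{eq:limr} into summability of $\sum_n\rho_h(z_n,z_{n+1})$. From $\liminf_n n\log\gamma_n>1$ we get $\eps>0$ and $N$ with $\log\gamma_m\ge(1+\eps)/m$ for all $m\ge N$; comparing with the harmonic series gives $\sum_{m=N}^{n-1}\log\gamma_m\ge(1+\eps)\log(n/N)-O(1)$, so $\rho_h(z_n,z_{n+1})\le c\,n^{-(1+\eps)}$ with $c$ absorbing the finite product $\prod_{m<N}\gamma_m^{-1}$. This is summable, so $\{z_n\}$ is fundamental in $\rho_h$; since $z_n=(x_1+[x_2,\dots,x_n])^{*}\le x_1^{*}\le r_1^{-1}h$ for $n\ge2$ and $z_1=x_1^{*}\le r_1^{-1}h$, the whole sequence lies in $\KK_h$, and completeness gives convergence to some $z\in\KK_h$.

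I expect the only genuinely delicate point to be choosing the sandwiching bounds for the inner elements $x_m+[x_{m+1},\dots,x_n]$ so that the constant produced by Lemma~\ref{lemma:lip} lands exactly on the ratio $(R_m+r_{m+1}^{-1})/r_m$ of \eqref{eq:limr} — in particular recognizing that the relevant upper bound is $[x_{m+1},\dots,x_n]\le x_{m+1}^{*}\le r_{m+1}^{-1}h$ and does not involve $R_{m+1}$ — and then spotting the telescoping identity $\alpha_m=r_{m+1}/(r_m\gamma_m)$ that collapses the product against the final factor $r_n^{-1}$. The rest is bookkeeping and a routine comparison with $\sum 1/n$.
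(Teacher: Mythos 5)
Your proof is correct and follows essentially the same route as the paper: the same sandwich $r_mh\le x_m+[x_{m+1},\dots,x_n]\le (R_m+r_{m+1}^{-1})h$, the same iteration of Lemma~\ref{lemma:lip} to get $\rho_h(z_n,z_{n+1})\le r_n^{-1}\prod_m r_m^{-2}C^2_{(R_m+r_{m+1}^{-1})/r_m}$, and the same conclusion via summability of the resulting series. Your explicit telescoping identity and harmonic-series comparison simply spell out what the paper compresses into the phrase ``logarithmic convergence criterion.''
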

\begin{proof}
  Note that 
  \begin{displaymath}
    r_ih \leq x_i+[x_{i+1},\dots,x_n]\leq (R_i+r_{i+1}^{-1})h
  \end{displaymath}
  for all $i\leq n-1$. Denote $a_i=C_{(R_i+r_{i+1}^{-1})/r_i}$,
  $i\geq1$. By Lemma~\ref{lemma:lip},
  \begin{align*}
    \rho_h(z_n,z_{n+1})
    &=\rho_h((x_1+[x_2,\dots,x_n])^*,(x_1+[x_2,\dots,x_{n+1}])^*)\\
    &\leq r_1^{-2}a_1^2\rho_h([x_2,\dots,x_n],[x_2,\dots,x_{n+1}])\\
    &\leq
    r_1^{-2}r_2^{-2}a_1^2a_2^2\rho_h([x_3,\dots,x_n],[x_3,\dots,x_{n+1}])\\
    &\leq \rho_h(x_n^*,(x_n+x_{n+1}^*)^*)\prod_{i=1}^{n-1} (r_i^{-2}a_i^2)
    \leq r_{n+1}^{-1}\prod_{i=1}^{n} (r_i^{-2}a_i^2) \,.
  \end{align*}
  In view of \eqref{eq:limr}, the logarithmic convergence criterion
  yields that the series $\sum \rho_h(z_n,z_{n+1})$ converges, so that
  the sequence $\{z_n\}$ is fundamental.
\end{proof}

\begin{remark}
  If \eqref{eq:xh} holds, then \eqref{eq:limr} becomes
  \begin{displaymath}
    \liminf_{n\to \infty} n\log(r_nr_{n+1})>1\,.
  \end{displaymath}
\end{remark}

Theorem~\ref{thr:uniform} and Corollary~\ref{cor:uniform-simple} cover
the important case of continued fractions
\begin{equation}
  \label{eq:zn-xxx}
  z_n=[\underbrace{x,\dots,x}_n]
\end{equation}
with constant terms.  An alternative proof of the convergence of
continued fractions with constant terms under the same conditions can
be carried over using the contraction mapping theorem.

For continued fractions with constant terms, it may be of advantage to
check directly the conditions of Theorem~\ref{thr:subk} instead of
bounding the term from below and from above.

\begin{corollary}
  \label{cor:constant-bis}
  Assume that $(\KK_h,\rho_h)$ is complete and condition
  \eqref{eq:xhc} holds. If $x\in\KK_h^*$ is such that, for some $k\geq
  1$,
  \begin{displaymath}
    x+[\underbrace{x,\dots,x}_{2k}]\geq ah
  \end{displaymath}
  and 
  \begin{displaymath}
    x+[\underbrace{x,\dots,x}_{2k-1}]\leq bh
  \end{displaymath}
  with $a>C_{b/a}$, then the continued fraction \eqref{eq:zn-xxx}
  converges. 
\end{corollary}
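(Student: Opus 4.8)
The plan is to obtain this as the special case of Theorem~\ref{thr:subk} in which the generating sequence is constant, $x_n=x$ for all $n\ge1$. Two of the hypotheses of Theorem~\ref{thr:subk} --- completeness of $(\KK_h,\rho_h)$ and condition \eqref{eq:xhc} --- are assumed outright. For a constant sequence the quantities appearing in \eqref{eq:xmk} and \eqref{eq:xmk2} do not depend on $n$, so those two conditions become precisely the two displayed inequalities $x+[\underbrace{x,\dots,x}_{2k}]\ge ah$ and $x+[\underbrace{x,\dots,x}_{2k-1}]\le bh$. Since $C_{b/a}$ is defined only for $b/a\in[1,\infty]$ we have $b\ge a$, and $a>C_{b/a}\ge C_1=1>0$, so the requirement $0<a\le b\le\infty$ with $C_{b/a}<a$ is met as well. (The consistency $a\le b$ can also be seen directly: the $2k$th approximant sits in an even slot and $x\le\neutral^*$, so by Lemma~\ref{lemman:zn} one has $[\underbrace{x,\dots,x}_{2k}]\le[\underbrace{x,\dots,x}_{2k-1}]$, whence $ah\le x+[\underbrace{x,\dots,x}_{2k}]\le x+[\underbrace{x,\dots,x}_{2k-1}]\le bh$.)

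It remains to verify condition \eqref{eq:limsupm} for some $l\ge0$, and here the constancy of the terms makes the matter almost trivial. For a constant sequence the approximant $[x_m,\dots,x_n]$ depends only on the length $n-m+1$, so for every $n$ with $n-2k-l+1\ge1$ one has $[x_{n-2k-l+1},\dots,x_n]=z_{2k+l}$ and $[x_{n-2k-l+1},\dots,x_{n+1}]=z_{2k+l+1}$, where $z_m=[\underbrace{x,\dots,x}_m]$. Thus the $\limsup$ in \eqref{eq:limsupm} is the fixed number $\rho_h(z_{2k+l},z_{2k+l+1})$, and the only point to settle is its finiteness, i.e.\ that $z_{2k+l},z_{2k+l+1}\in\KK_h$. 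Taking $l=0$: the hypothesis $x\in\KK_h^*$ means $z_1=x^*\le\alpha h$ for some $\alpha>0$; by Lemma~\ref{lemman:zn} the odd approximants are decreasing, so $z_{2m-1}\le z_1\le\alpha h$, and the even/odd comparison just used gives $z_{2m}\le z_{2m-1}\le\alpha h$. Hence all $z_m$ lie in $\KK_h$ and $\rho_h(z_{2k},z_{2k+1})<\infty$. (Alternatively, \eqref{eq:xmk} with $n=1$ already gives $z_{2k+1}=(x+z_{2k})^*\le(ah)^*=a^{-1}h$, and monotonicity handles the later approximants.)

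With all the hypotheses of Theorem~\ref{thr:subk} checked, that theorem yields the convergence of $z_n=[\underbrace{x,\dots,x}_n]$ --- that is, of the continued fraction \eqref{eq:zn-xxx} --- to an element of $\KK_h$. There is essentially no obstacle here: all the analytic content is already contained in Theorem~\ref{thr:subk}, and the one step that needs attention, namely that \eqref{eq:limsupm} holds, collapses for constant terms to the finiteness of a single $\rho_h$-distance, which is exactly what $x\in\KK_h^*$ secures. As a side remark, since the constant sequence also satisfies $x_n=x\ge\alpha^{-1}h$, one may instead apply Corollary~\ref{cor:epsilon-in} and obtain in addition the geometric rate \eqref{eq:2} with $q=a^{-1}C_{b/a}$ and $r=\alpha^{-1}$.
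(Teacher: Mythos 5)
Your proof is correct and follows exactly the route the paper intends: the corollary is stated (without a written proof) immediately after the remark that for constant terms one should check the hypotheses of Theorem~\ref{thr:subk} directly, and that is precisely what you do — the two displayed inequalities are \eqref{eq:xmk} and \eqref{eq:xmk2} for the constant sequence, and \eqref{eq:limsupm} reduces to the finiteness of the single distance $\rho_h(z_{2k},z_{2k+1})$, which $x\in\KK_h^*$ guarantees via the monotonicity in Lemma~\ref{lemman:zn}. Your verification that all approximants lie in $\KK_h$ and your side remark about the rate from Corollary~\ref{cor:epsilon-in} are both sound.
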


\begin{corollary}
  \label{cor:constant-bis-3}
  Assume that $(\KK_h,\rho_h)$ is complete and condition \eqref{eq:xh}
  holds. Let $z_n$ be the $n$th approximant of the continued fraction
  with constant term $x\in\KK_h^*$. If $z_{2k-1}\leq
  rh$ for $r<1$ and some $k\geq1$, then $z_n$ converges to
  $z\in\KK_h$.  
\end{corollary}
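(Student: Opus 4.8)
The plan is to deduce the statement from Corollary~\ref{cor:constant-bis}, whose hypotheses ask, for a \emph{single} index $k$, for the two bounds $x+[\underbrace{x,\dots,x}_{2k}]\geq ah$ and $x+[\underbrace{x,\dots,x}_{2k-1}]\leq bh$ together with $a>C_{b/a}$. Since the term is constant, the defining recursion collapses to $z_{n+1}=(x+z_n)^*$ for $n\geq1$, hence $z_{n+1}^*=x+z_n$; this identity is exactly what turns information about approximants into information about the quantities $x+z_m$ entering that corollary, and it is the whole point of the argument.

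First I would invoke Lemma~\ref{lemman:zn}: the odd-numbered approximants form a decreasing sequence, so the hypothesis $z_{2k-1}\leq rh$ propagates to $z_{2k+1}\leq z_{2k-1}\leq rh$. Applying the order reversing involution, and using $(rh)^*=r^{-1}h$ (valid because $h$ is self-polar and $(ax)^*=a^{-1}x^*$), this yields
\begin{displaymath}
  x+[\underbrace{x,\dots,x}_{2k}]=x+z_{2k}=z_{2k+1}^*\geq r^{-1}h\,.
\end{displaymath}
Thus the lower bound required by Corollary~\ref{cor:constant-bis} holds with $a=r^{-1}$, and $a>1$ precisely because $r<1$.

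For the upper bound I would simply take $b=\infty$, so that $x+[\underbrace{x,\dots,x}_{2k-1}]\leq\neutral^*=\infty h=bh$ holds trivially and no control of $x$ from above is needed. Since \eqref{eq:xh} holds we may use \eqref{eq:xhc} with $C_R\equiv1$, in particular $C_{b/a}=C_\infty=1<r^{-1}=a$. As $x\in\KK_h^*$ by assumption and $(\KK_h,\rho_h)$ is complete, Corollary~\ref{cor:constant-bis} then gives convergence of $z_n=[\underbrace{x,\dots,x}_n]$, and, since that corollary rests on Theorem~\ref{thr:subk}, the limit $z$ lies in $\KK_h$.

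I do not expect a genuine obstacle here: the one thing worth isolating is that the one-sided hypothesis $z_{2k-1}\leq rh$, combined with monotonicity of the odd approximants and the recursion $z_{2k+1}^*=x+z_{2k}$, is exactly the lower bound $x+z_{2k}\geq r^{-1}h$ needed to apply Corollary~\ref{cor:constant-bis} with the same $k$ and with $b=\infty$. Once this is noticed, the remainder is a routine verification, and the only place demanding a little care is the bookkeeping of parities (checking that the index $k$ of the hypothesis is the index that makes the corollary applicable, rather than a shifted one).
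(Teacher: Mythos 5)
Your proposal is correct and is essentially the intended derivation: the paper states this corollary without proof immediately after Corollary~\ref{cor:constant-bis}, and the natural route is exactly yours — use $z_{2k+1}\leq z_{2k-1}\leq rh$ from Lemma~\ref{lemman:zn}, pass to polars via $x+z_{2k}=z_{2k+1}^*\geq r^{-1}h$, and apply Corollary~\ref{cor:constant-bis} with $a=r^{-1}>1=C_\infty$ and $b=\infty$. The parity bookkeeping you flag (shifting from $z_{2k-1}$ to $z_{2k+1}$ so the same $k\geq1$ works) is handled correctly.
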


\begin{remark}
  The limit $z$ of the continued fraction with constant term $x$
  satisfies the equation
    \begin{equation}
    \label{eq:zlim}
    z^*=z+x\,,
  \end{equation}
  Consider now the changes that happen to the basic equation
  \eqref{eq:zlim} if either $z$ or $x$ are scaled. For each $t\geq 1$,
  $t^{-1}z$ satisfies
  \begin{displaymath}
    (t^{-1}z)^*=t^{-1}z+x_t
  \end{displaymath}
  for $x_t=(t-t^{-1})z^*+t^{-1}x$. Indeed,
  \begin{displaymath}
    t^{-1}z+x_t=t^{-1}(z+x)+(t-t^{-1})z^*=tz^*=(t^{-1}z)^*\,.
  \end{displaymath}
  Assume that \eqref{eq:xh} holds and $rh\leq x\leq Rh$ so that one of
  the conditions of Corollary~\ref{cor:uniform-simple} holds. Then
  the continued fraction with the constant term $tx$ converges
  for all $t\geq1$, so that there exists unique $z_t\in\KK$ that
  satisfies
  \begin{displaymath}
    z_t^*=z_t+tx,\qquad t\geq 1\,.
  \end{displaymath}
  Fix $\beta\in[0,1]$ and note that $y_t=t^\beta z_t$ satisfies the equation
  \begin{displaymath}
    y_t^*=t^{-2\beta}y_t+t^{1-\beta}x\,.
  \end{displaymath}
  Since $x\geq rh$, we have $y_t\leq t^{\beta-1}rh$. If $\beta<1$,
  then $y_t=t^\beta z_t$ converges to $\neutral$ as $t\to\infty$ by
  Lemma~\ref{lemma:order}.  If $\beta=1$, then the equation becomes
  $y_t^*=t^{-2}y_t+x$. Since 
  \begin{displaymath}
    t^{-2}y_t=t^{-1}z_t=t^{-1}(z_t+tx)^*\leq t^{-2}x^*\leq t^{-2}r^{-1}h\,,
  \end{displaymath}
  we have $(x+t^{-2}r^{-1}h)^*\leq tz_t\leq x^*$. Therefore, $tz_t$ converges
  to $x$ as $t\to\infty$.
\end{remark}

While the following result can be proved under condition
\eqref{eq:xhc}, we formulate its simpler version. 

\begin{theorem}
  \label{thr:dist-gen}
  Assume that \eqref{eq:xh} holds. If the continued fractions with
  constant terms $x'$ and $x''$, such that $x'\geq rh$ and $x''\geq rh$
  with $r>1$, converge respectively to $z'$ and $z''$, then
  \begin{displaymath}
    \rho_h(z',z'')\leq \rho_h(x',x'')\frac{1}{r^2-1}\,.
  \end{displaymath}
\end{theorem}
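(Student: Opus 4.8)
The plan is to exploit the fixed-point equation \eqref{eq:zlim}, which the limits of convergent continued fractions with constant terms satisfy: $z'^{*}=z'+x'$ and $z''^{*}=z''+x''$. First I would extract the elementary order consequences of the hypotheses $x'\geq rh$, $x''\geq rh$ with $r>1$. Since $\neutral\leq z'$ and the order is compatible with the addition, $z'^{*}=z'+x'\geq x'\geq rh$, and likewise $z''^{*}\geq rh$; applying the involution gives $z'\leq r^{-1}h$ and $z''\leq r^{-1}h$. Thus $z',z''\in\KK_h$, so $s:=\rho_h(z',z'')$ is a well-defined finite number (at most $r^{-1}$) by Lemma~\ref{lemma:order}. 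If $\rho_h(x',x'')=\infty$ there is nothing to prove, so I assume $t:=\rho_h(x',x'')<\infty$.

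Next I would propagate the two defining inequalities through the addition. From $z'\leq z''+sh$ and $x'\leq x''+th$, compatibility of the order with $+$ yields
\[
z'^{*}=z'+x'\leq z''+x''+(s+t)h=z''^{*}+(s+t)h ,
\]
and symmetrically $z''^{*}\leq z'^{*}+(s+t)h$, so $\rho_h(z'^{*},z''^{*})\leq s+t$. Now comes the contraction step: since $z'^{*}\geq rh$ and $z''^{*}\geq rh$ and trivially $z'^{*},z''^{*}\leq\neutral^{*}$, Lemma~\ref{lemma:lip} applies with lower bound $r$ and upper bound $R=\infty$; because \eqref{eq:xh} is precisely \eqref{eq:xhc} with constant $1$ for all $x\geq h$ (so $C_\infty=1$, cf. Remark~\ref{rem:xmk}), the lemma gives
\[
\rho_h(z',z'')=\rho_h\bigl((z'^{*})^{*},(z''^{*})^{*}\bigr)\leq r^{-2}\rho_h(z'^{*},z''^{*})\leq r^{-2}(s+t).
\]
Hence $s\leq r^{-2}(s+t)$, i.e. $(r^{2}-1)s\leq t$, and since $r>1$ this rearranges to $\rho_h(z',z'')\leq\rho_h(x',x'')/(r^{2}-1)$, as claimed.

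The only point requiring care is the contraction step, since $z'^{*}=z'+x'$ need not lie in $\KK_h$ (no finite upper bound on $x'$ is assumed); what one actually uses is solely the lower bound $z'^{*}\geq rh$ together with the trivial upper bound $z'^{*}\leq\neutral^{*}$ and $C_\infty=1$ under \eqref{eq:xh}, and if one prefers, the required estimate $\rho_h((z'^{*})^{*},((z'^{*})+uh)^{*})\leq r^{-2}u$ can be re-derived directly from \eqref{eq:xh} and the scale-homogeneity of $\rho_h$, exactly as in the proof of Lemma~\ref{lemma:lip}. An essentially equivalent route, avoiding \eqref{eq:zlim} altogether, is to run the same recursion on the approximants: with $s_n=\rho_h([x',\dots,x'],[x'',\dots,x''])$ one gets $s_1\leq r^{-2}t$ and $s_{n+1}\leq r^{-2}(s_n+t)$ from Lemma~\ref{lemma:lip}, hence $s_n\leq t/(r^{2}-1)$ for all $n$, and letting $n\to\infty$ (using continuity of $\rho_h$ under the assumed convergence) gives the bound. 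I expect no further obstacle; the remaining steps are routine manipulations with the closed order and the triangle-type inequalities for $\rho_h$.
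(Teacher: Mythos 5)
Your proposal is correct and follows essentially the same route as the paper: it uses the fixed-point equation \eqref{eq:zlim}, the translation-compatibility of $\rho_h$ to get $\rho_h(z'+x',z''+x'')\leq\rho_h(z',z'')+\rho_h(x',x'')$, the Lipschitz bound of Lemma~\ref{lemma:lip} with constant $r^{-2}$, and a rearrangement using $r>1$. Your additional observation that $z',z''\leq r^{-1}h$, so that $\rho_h(z',z'')$ is finite and the rearrangement is legitimate, is a worthwhile point of care that the paper leaves implicit.
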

\begin{proof}
  It follows from \eqref{eq:zlim}, the triangle inequality, the
  translation invariance of the metric $\rho_h$, and
  Lemma~\ref{lemma:lip} that
  \begin{align*}
    \rho_h(z',z'')&=\rho_h((z'+x')^*,(z''+x'')^*)\\
    &\leq r^{-2}\rho_h(z'+x',z''+x'')\\
    &\leq r^{-2}\left(\rho_h(z',z'')+\rho_h(x',x'')\right)\,.
  \end{align*}
  The statement is obtained after rearranging the terms taking into
  account that $r>1$. 
\end{proof}

\begin{example}
  The setting of Theorem~\ref{thr:subk} is well adjusted to confirm
  the convergence of periodic continued fractions. For instance,
  consider the continued fraction with alternating elements
  $x,y\in\KK$ assuming that \eqref{eq:xh} holds. If $x\geq\eps h$,
  $y\geq \eps h$ for some $\eps>0$, and 
  \begin{displaymath}
    x+(y+x^*)^*\geq rh,\qquad y+(x+y^*)^*\geq rh
  \end{displaymath}
  for some $r>1$, then \eqref{eq:xmk} holds with $k=1$ and the
  continued fraction $[x,y,x,y,\ldots]$ converges.  If $y=x^*$, and
  \eqref{eq:xh} holds, then condition \eqref{eq:xmk} for sufficiently
  large $k$ amounts to $\upsilon(1,1)x\geq rh$ and
  $\upsilon(1,1)x^*\geq rh$ for some $r>1$, where
  $\upsilon(1,1)=\thf(1+\sqrt{5})$. In this case, the continued
  fraction $[x,x^*,x,x^*,\ldots]$ converges and the polar $y=z^*$ to
  its limit $z$ satisfies the equation
  \begin{displaymath}
    x+(x\oplus y)=y\,.
  \end{displaymath}
\end{example}

\section{Continued fractions of convex sets}
\label{sec:cont-fract-conv}

Let $\KK=\sCo$ be the family of all \emph{convex closed} sets in $\R^d$
containing the origin. We refer to \cite{schn2} for a wealth of
information about convex sets.  The closed \emph{Minkowski sum} $K+L$
of two sets $K,L\in\sCo$ is defined as the closure of the set
$\{x+y:\;x\in K,y\in L\}$ of pairwise sums of points from $K$ and
$L$. If at least one summand is compact, then the set of pairwise sums
is closed and no additional closure is required.

The family $\sCo$ with the closed Minkowski addition is a semigroup
with the neutral element $\neutral=\{0\}$ being the origin, partially
ordered by inclusion. Note that all inclusions for sets are understood
in the non-strict sense. Since the convex sets from $\sCo$ contain the origin,
$K\subset L$ implies that $K+M\subset L+M$ for $K,L,M\in\sCo$, so that
the order is compatible with the addition. The scaling by positive
reals is defined conventionally as $aK=\{ax:\; x\in K\}$.

It is known that the only order reversing involution on $\sCo$ (up to
a rigid motion) is the polar transform, see
\cite{art:mil08t,boer:sch08}. The \emph{polar} to $K\in\sCo$ is
defined as
\begin{displaymath}
  K^*=\{u\in\R^d:\; s_K(u)\leq 1\}\,,
\end{displaymath}
where 
\begin{displaymath}
  s_K(u)=\sup\{\langle u,x\rangle:\; x\in K\}
\end{displaymath}
is the \emph{support function} of $K$ and $\langle u,x\rangle$ denotes
the scalar product. Note that the support function may take infinite
values if $K$ is not bounded. Since the support function is
homogeneous of order 1, it suffices to consider its values for $u$
with the Euclidean norm $\|u\|=1$, i.e. for all $u$ from the unit
Euclidean sphere $\Sphere$ in $\R^d$. The inclusion of convex sets
turns into the pointwise domination of their support functions.  Let
\begin{displaymath}
  r_K(u)=\sup\{t:\; tu\in K\}
\end{displaymath}
be the radial function of $K\in\sCo$. Then $r_{K^*}(u)=1/s_K(u)$ for
all unit vectors $u$, see \cite[Sec.~1.6]{schn2}.

The only convex set invariant for the polar transform is the unit
Euclidean ball $B$.  If $K=rB$ is the ball of radius $r$ centred at
the origin, then $K^*=r^{-1}B$ is the ball of radius $r^{-1}$. Further
examples can be found in \cite[Sec.~1.6]{schn2}.  The polar to the
neutral element $\{0\}$ is the whole space, i.e. $\neutral^*=\R^d$.

The family $\KK_h=\sKo$ consists of \emph{convex bodies} (i.e. convex
compact sets) containing the origin and the metric
$\rho_h$ from \eqref{eq:rhoh} is the \emph{Hausdorff distance}
\begin{displaymath}
  \rho_H(K,L)=\inf\{\eps>0:\; K\subset L+\eps B,\; L\subset K+\eps B\}
\end{displaymath}
between $K$ and $L$ from $\sKo$. Note that $K+\eps B$ is called the
$\eps$-envelope of $K$, which is alternatively defined as the set of
points within distance at most $\eps$ to $K$.  The \emph{norm} of a
set defined as
\begin{displaymath}
  \|K\|=\sup\{\|x\|:\; x\in K\}=\rho_H(K,\{0\})
\end{displaymath}
is the radius of the smallest centred ball that contains $K$. The
family $\KK_h^*$ consists of all convex closed sets that contain a
neighbourhood of the origin and $\KK_h\cap \KK_h^*=\sK_{00}$ is the
family of convex bodies containing a neighbourhood of the origin.

It is known that $\sKo$ with the Hausdorff metric is a complete
separable metric space, see \cite[Th.~1.8.3]{schn2}.  We apply the
same definition of the Hausdorff metric also for closed (possibly
non-compact) sets, noticing that $\rho_H$ may take infinite values.

Consider the set-valued continued fraction given by
\begin{displaymath}
  F_n=[K_1,\dots,K_n], \quad n\geq 1\,,
\end{displaymath}
for a sequence $K_n\in\sCo$, $n\geq 1$. 

\begin{theorem}
  \label{thr:constant}
  Let $K_n=K$ for all $n\geq1$ and $K\in\sCo$. Assume that $K\supset rB$, where
  either (i) $r>1$ or (ii) $r=1$ and $K$ is compact or (iii) $0<r<1$
  and $\|K\|<r/(1-r)$. Then 
  \begin{equation}
    \label{eq:fnk}
    F_n=[\underbrace{K,\dots,K}_n]
  \end{equation}
  converges in the Hausdorff metric to a convex body $F\in\sKo$ that
  satisfies the equation
  \begin{equation}
    \label{eq:fplusk}
    F^*=F+K\,.
  \end{equation}
\end{theorem}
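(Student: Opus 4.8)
The plan is to realise Theorem~\ref{thr:constant} as an application of Corollary~\ref{cor:uniform-simple} (equivalently Theorem~\ref{thr:uniform}) in the semigroup $\KK=\sCo$ with the self-polar element $h=B$, the unit Euclidean ball. The discussion preceding the statement already identifies $\KK_h=\sKo$, $\rho_h=\rho_H$, and records that $(\sKo,\rho_H)$ is complete; condition \eqref{eq:3} follows from $\bigcap_{t>0}(L+tB)=L$ for closed $L$. Thus the structural hypotheses of Section~\ref{sec:cont-fract-semigr} are automatic, and what remains is the analytic condition \eqref{eq:xh} together with matching the three alternatives.

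First I would verify \eqref{eq:xh}: if $B\subseteq K$ then $\rho_H(K^*,(K+tB)^*)\le t$ for every $t>0$. Since $K\subseteq K+tB$, polarity reverses inclusion, so $(K+tB)^*\subseteq K^*$; moreover $B\subseteq K$ forces $s_K\ge 1$, hence $s_{K+tB}\ge 1$, on $\Sphere$, so both polars lie in $B$ and are convex bodies. Writing the radial functions $r_{K^*}(u)=1/s_K(u)$ and $r_{(K+tB)^*}(u)=1/(s_K(u)+t)$ for $u\in\Sphere$, and using the elementary bound $\rho_H(L,M)\le\sup_{u\in\Sphere}|r_L(u)-r_M(u)|$ for convex bodies containing the origin, I obtain
\[
\rho_H\bigl(K^*,(K+tB)^*\bigr)\le\sup_{u\in\Sphere}\frac{t}{s_K(u)\,(s_K(u)+t)}\le t .
\]
Running the same estimate with $rB\subseteq K$ in place of $B\subseteq K$ recovers the $r^{-2}$-Lipschitz property of the polar transform, i.e.\ \eqref{eq:xhc} holds with $C_R\equiv1$ and $C_\infty=1$; this will be reused for the limit equation.

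Next I would read off the hypotheses. With $x_n=K$ for all $n$, the assumption $K\supseteq rB$ is exactly $rB\subseteq x_n$, and with $R=\|K\|\in[r,\infty]$ one has $x_n\subseteq RB$. Cases (i) $r>1$, (ii) $r=1$ with $K$ compact hence $R<\infty$, and (iii) $r<1$ with $R=\|K\|<r/(1-r)$ match the three cases of Corollary~\ref{cor:uniform-simple} verbatim (in case (iii) the strict inequality is exactly what makes $\upsilon(r,R)>1$, which is what Theorem~\ref{thr:uniform} requires once $C_R\equiv1$). Corollary~\ref{cor:uniform-simple} then gives that $F_n=[\underbrace{K,\dots,K}_n]$ converges in $\rho_H$ to some $F\in\sKo$, i.e.\ a convex body containing the origin, together with the rate \eqref{eq:2}.

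Finally I would establish \eqref{eq:fplusk}. Since $F_n$ is compact, $K+F_n$ is closed and convex, so the bipolar theorem gives $F_{n+1}^{*}=\bigl((K+F_n)^{*}\bigr)^{*}=K+F_n$. Minkowski addition is $1$-Lipschitz in $\rho_H$, so $K+F_n\to K+F$; and since $K+F_n\supseteq rB$ for all $n$, the $r^{-2}$-Lipschitz bound for polarity (valid even when $K$, hence $K+F_n$, is unbounded, because the polars stay inside $r^{-1}B$) gives $(K+F_n)^{*}\to(K+F)^{*}$. As $(K+F_n)^{*}=F_{n+1}\to F$, uniqueness of limits yields $F=(K+F)^{*}$, and applying the polar once more gives $F^{*}=F+K$; alternatively one may simply cite \eqref{eq:zlim}. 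The main obstacle is the careful proof of \eqref{eq:xh} via radial functions, and, in case (i), making sure the continuity argument for the limit equation tolerates possibly unbounded $K$ and $K+F_n$ — which it does because their polars remain uniformly bounded.
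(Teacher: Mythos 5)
Your proposal is correct and follows the paper's own route: establish the key inequality \eqref{eq:xh} for the polar transform (so that $C_R\equiv 1$) and then invoke Corollary~\ref{cor:uniform-simple}, whose three cases match (i)--(iii) exactly. The only internal difference is that you verify \eqref{eq:xh} via radial functions, $r_{K^*}(u)=1/s_K(u)$, together with the bound $\rho_H(L,M)\le\sup_{u\in\Sphere}|r_L(u)-r_M(u)|$ for closed sets star-shaped about the origin, whereas the paper manipulates support functions and the envelope inclusion $(1+\eps)A\subset A+\eps\|A\|B$ to get $K^*\subset(1+t)(K+tB)^*\subset (K+tB)^*+tB$; both computations are valid, and your explicit bipolar/continuity derivation of the fixed-point equation \eqref{eq:fplusk}, which the paper relegates to a remark, is a sound addition.
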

\begin{proof}
  We confirm the validity of \eqref{eq:xh} that becomes
  \begin{equation}
    \label{eq:xh-set}
    \rho_H(K^*,(K+tB)^*)\leq t
  \end{equation}
  for all $K\in\sCo$ such that $B\subset K$. Then $K^*\subset B$ and 
  \begin{align*}
    K^*&=\{u\in B:\; s_K(u)\leq 1\}\\
    &=\{u\in B:\; s_K(u)+s_{tB}(u)\leq 1+t\|u\|\}\\
    &\subset \{u\in B:\; s_K(u)+s_{tB}(u)\leq 1+t\}\\
    & =(1+t)\{u\in (1+t)^{-1}B:\; s_K(u)+s_{tB}(u)\leq 1\}\\
    & \subset (1+t) \{u\in B:\; s_{K+tB}(u)\leq 1\}\\
    & = (1+t)(K+tB)^*\,.
  \end{align*}
  For any convex body $A$ and any $\eps>0$, 
  \begin{displaymath}
    (1+\eps)A\subset A+\eps\|A\|B\,.
  \end{displaymath}
  Indeed, taking the support functions of the both sides, it is
  immediately seen that 
  \begin{displaymath}
    (1+\eps)s_A(u)\leq s_A(u)+\eps\|A\|\,,\quad u\in\Sphere\,.
  \end{displaymath}
  In view of this and the fact that $(K+tB)^*\subset B$,
  \begin{align*}
    K^*\subset (1+t)(K+tB)^*\subset (K+tB)^*+tB\,,
  \end{align*}
  so that \eqref{eq:xh-set} holds, and
  Corollary~\ref{cor:uniform-simple} yields the result.
\end{proof}

In view of Corollary~\ref{cor:inv-continuous}, condition
\eqref{eq:xh} verified in the proof of Theorem~\ref{thr:constant}
yields the following result that is of independent interest.

\begin{theorem}
  For any two convex compact sets $K,L$ containing the origin, 
  \begin{displaymath}
    \rho_H(K^*,L^*)\leq \max(\|K^*\|,\|L^*\|)^2\rho_H(K,L)\,.
  \end{displaymath}
\end{theorem}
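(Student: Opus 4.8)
The plan is to read off this statement from Corollary~\ref{cor:inv-continuous} once the abstract framework is instantiated in the semigroup $\sCo$. In the proof of Theorem~\ref{thr:constant} we verified condition \eqref{eq:xh} for $\KK=\sCo$ with the self-polar element $h=B$, i.e. $\rho_H(K^*,(K+tB)^*)\le t$ whenever $B\subset K$. Since \eqref{eq:xh} is precisely \eqref{eq:xhc} with $C_R\equiv1$, we may take $C_\infty=1$. Under the identifications $\rho_h=\rho_H$, $\KK_h=\sKo$ and $\|\cdot\|_h=\|\cdot\|$, Corollary~\ref{cor:inv-continuous} then gives exactly $\rho_H(K^*,L^*)\le\max(\|K^*\|,\|L^*\|)^2\rho_H(K,L)$ for all convex bodies $K,L$ containing the origin, which is the assertion.

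Before invoking the corollary I would dispose of the degenerate case in which $0$ fails to be an interior point of $K$ or of $L$; then the corresponding polar is unbounded, so $\max(\|K^*\|,\|L^*\|)=\infty$ and the inequality holds trivially. The only thing to check is that this is not an indeterminate $0\cdot\infty$: if $\rho_H(K,L)=0$ then $K=L$, hence $K^*=L^*$ and both sides vanish.

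For the remaining case, write $c:=\max(\|K^*\|,\|L^*\|)\in(0,\infty)$. Beyond citing Corollary~\ref{cor:inv-continuous}, one may give a short self-contained argument mirroring the computation in Theorem~\ref{thr:constant}. Using $r_{K^*}(u)=1/s_K(u)$ one has $\|K^*\|=1/\inf_{\|u\|=1}s_K(u)$, so the bound $\|K^*\|\le c$ is equivalent to $B\subset cK$, and likewise $B\subset cL$; thus $cK$ and $cL$ both contain the unit ball. Set $t=\rho_H(cK,cL)=c\,\rho_H(K,L)$. From $cL\subset cK+tB$ and the order-reversal of the polar we get $(cK+tB)^*\subset(cL)^*$, which combined with the inclusion $(cK)^*\subset(cK+tB)^*+tB$ established in the proof of Theorem~\ref{thr:constant} yields $(cK)^*\subset(cL)^*+tB$; by symmetry $(cL)^*\subset(cK)^*+tB$, so $\rho_H((cK)^*,(cL)^*)\le t$. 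Since $(cK)^*=c^{-1}K^*$ and $(cL)^*=c^{-1}L^*$, this reads $c^{-1}\rho_H(K^*,L^*)\le t=c\,\rho_H(K,L)$, which rearranges to the claim.

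I do not expect a genuine obstacle: the statement is essentially a corollary, and the only points needing a moment's care are the bookkeeping in the degenerate ($c=\infty$) case and using the equivalence $\|K^*\|\le c\iff B\subset cK$ in the correct direction when rescaling.
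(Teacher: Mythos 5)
Your proposal is correct and follows exactly the paper's route: the result is deduced from Corollary~\ref{cor:inv-continuous} with $C_\infty=1$, using condition \eqref{eq:xh} as verified in the proof of Theorem~\ref{thr:constant}. Your extra care with the degenerate case $\max(\|K^*\|,\|L^*\|)=\infty$ and the self-contained rescaling argument are sound but just unwind the same chain of inclusions.
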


\begin{theorem}
  \label{thr:nec-suf}
  Let $K$ be a convex set containing a neighbourhood of the
  origin. The continued fraction \eqref{eq:fnk} with constant term $K$
  converges in the Hausdorff metric if and only if $F_{2k-1}\subset
  aB$ for $a<1$ and at least one $k\geq1$.
\end{theorem}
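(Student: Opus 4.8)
The plan is to prove the two implications separately, with the odd approximants $F_{2k-1}$ as the quantity to track. Fix $\varepsilon>0$ with $\varepsilon B\subseteq K$; since $0\in F_{n-1}$ we have $K+F_{n-1}\supseteq K\supseteq\varepsilon B$, and hence every approximant satisfies $F_n=(K+F_{n-1})^*\subseteq\varepsilon^{-1}B$, so all of them are convex bodies lying in the fixed ball $\varepsilon^{-1}B$. I will also use that, by the proof of Theorem~\ref{thr:constant}, condition \eqref{eq:xh} holds on $\sCo$, so $C_\infty=1$ and Lemma~\ref{lemma:lip} (with $r=\varepsilon$, $R=\infty$) gives that the polar map is $\varepsilon^{-2}$-Lipschitz on the family of closed convex sets containing $\varepsilon B$. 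This last estimate is the workhorse; it is needed because the $F_n$ themselves may fail to contain a neighbourhood of the origin (they can be segments), so Corollary~\ref{cor:inv-continuous} cannot be applied to them directly---instead one applies the Lipschitz bound to the sets $K+F_n$, which \emph{do} contain $\varepsilon B$.

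Sufficiency is immediate: if $F_{2k-1}\subseteq aB$ with $a<1$, then, since $\sKo$ is complete, \eqref{eq:xh} holds and $K\in\KK_h^*$, Corollary~\ref{cor:constant-bis-3} applied with $x=K$ and $r=a$ yields convergence of $F_n=[K,\dots,K]$ to some $F\in\sKo$.

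For necessity I will first set up the structure: by Lemma~\ref{lemman:zn}, $\{F_{2m-1}\}$ decreases and $\{F_{2m}\}$ increases, and being monotone families of convex bodies inside $\varepsilon^{-1}B$ they converge in $\rho_H$ to $F_{\mathrm{odd}}=\bigcap_mF_{2m-1}$ and $F_{\mathrm{even}}=\overline{\bigcup_mF_{2m}}$ respectively; the full sequence converges iff $F_{\mathrm{odd}}=F_{\mathrm{even}}=:F$. Assuming convergence, I will pass to the limit in $F_{n}=(K+F_{n-1})^*$: Minkowski addition is $\rho_H$-nonexpansive in each argument, so $K+F_{2m}\to K+F_{\mathrm{even}}$ and $K+F_{2m-1}\to K+F_{\mathrm{odd}}$, and applying the $\varepsilon^{-2}$-Lipschitz bound for the polar (every set involved contains $\varepsilon B$) gives $F_{\mathrm{odd}}=(K+F_{\mathrm{even}})^*$ and $F_{\mathrm{even}}=(K+F_{\mathrm{odd}})^*$. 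With $F_{\mathrm{odd}}=F_{\mathrm{even}}=F$ this reads $F=(K+F)^*$, so, taking polars and using that $K+F$ is closed (as $F$ is compact), we recover the limit equation $F^*=K+F$ of \eqref{eq:fplusk}.

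The decisive step is then to deduce $\|F\|<1$ from $F^*=K+F$. Since $\varepsilon B\subseteq K$ and $0\in F$, we have $F^*=K+F\supseteq F+\varepsilon B$, in particular $F\subseteq F^*$; the latter alone forces $\|F\|\le1$, because $x\in F$ gives $\|x\|^2=\langle x,x\rangle\le s_F(x)\le1$ (the last inequality since $x\in F^*$). If $\|F\|=1$, choose $x_0\in F$ with $\|x_0\|=1$; then the above forces $s_F(x_0)=1$, while $(1+\varepsilon)x_0=x_0+\varepsilon x_0\in F+\varepsilon B\subseteq F^*$ gives $1+\varepsilon=(1+\varepsilon)s_F(x_0)=s_F((1+\varepsilon)x_0)\le1$, a contradiction; hence $\|F\|<1$. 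Finally, since $F_{2m-1}\to F$ and $F\subseteq\|F\|B$, for all large $m$ one has $F_{2m-1}\subseteq F+\tfrac12(1-\|F\|)B\subseteq aB$ with $a=\tfrac12(1+\|F\|)<1$, which is exactly the asserted condition. I expect the main obstacle to be the limit-equation step: one must be careful not to invoke continuity of the polar on the $F_n$ directly, but to route it through the uniformly ``fat'' sets $K+F_n$.
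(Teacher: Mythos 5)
Your proof is correct and follows essentially the same route as the paper: sufficiency via Corollary~\ref{cor:constant-bis-3}, and necessity by showing the limit $F$ satisfies $F^*=F+K$ and deducing $\|F\|<1$ from the fact that $F^*\supseteq F+\eps B$. The only differences are cosmetic --- you justify the limit equation in more detail (routing continuity of the polar through the sets $K+F_n\supseteq\eps B$, which is the right care to take), and you extract $\|F\|<1$ from $F\subseteq F^*$ plus a perturbation argument rather than from the identity $s_{F^*}(u)=1/r_F(u)$ at the farthest direction, which the paper uses to get the explicit bound $\thf(\sqrt{\eps^2+4}-\eps)$.
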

\begin{proof}
  The sufficiency follows from
  Corollary~\ref{cor:constant-bis-3}. Assume that $F_n$ converges to
  $F$ that necessarily satisfies \eqref{eq:fplusk}.  Let $K\supset
  \eps B$ for $\eps>0$. Then $F\subset \eps^{-1}B$, whence $F$ is
  also compact and $\|F\|=R$ is finite. Then there exists $u$ with
  $\|u\|=1$ such that $s_F(u)=r_F(u)=R$. It follows from
  \eqref{eq:fplusk} that
  \begin{displaymath}
    s_{F^*}(u)\geq s_F(u)+\eps\,.
  \end{displaymath}
  Since $s_{F^*}(u)=1/r_F(u)=1/s_F(u)$, we have 
  \begin{displaymath}
    \frac{1}{s_F(u)}\geq s_F(u)+\eps\,,
  \end{displaymath}
  whence 
  \begin{displaymath}
    R=s_F(u)=\thf(\sqrt{\eps^2+4}-\eps)<1\,.
  \end{displaymath} 
  Thus, $F_n\subset aB$ for $a\in(R,1)$ and all sufficiently large
  $n$, in particularly, for the odd-numbered terms. 
\end{proof}

\begin{remark}
  Since the sequences $F_{2n}$ and $F_{2n-1}$ for the set-valued
  continued fraction with the constant term $K$ are monotone and
  bounded, they converge without any extra condition on the set
  $K$. But their limits may be different.
\end{remark}

\begin{example}
  a) It is easy to see that $K=rB$ satisfies the condition of
  Theorem~\ref{thr:constant} for each $r>0$. 

  b) A segment $K=[0,u]$ for a given $u\in\R^d$ does not satisfy the
  condition and the corresponding continued fraction diverges. Indeed,
  $F_n$ is a segment for even $n$ and a half-space for odd $n$.

  c) Assume that $K$ is a strip $\{(x_1,x_2):\; -a\leq x_1\leq
  a\}\subset\R^2$. Then $K^*$ is the segment with end-points at $(\pm
  a^{-1},0)$. Thus, $F_n$ is the segment with end-points at $(\pm
  a_n,0)$ where $a_n=[a,\dots,a]$. Thus, $F_n$ converges, whereas $K$
  does not satisfy the condition of Theorem~\ref{thr:constant} if
  $a\leq 1$. However, $F_{2k+1}\subset rB$ with $r<1$ for sufficiently
  large $k$, and so Theorem~\ref{thr:nec-suf} confirms the convergence
  of the continued fraction.
\end{example}

In relation to continued fractions generated by non-constant
sequences, Theorem~\ref{thr:subk} taking into account
Remark~\ref{rem:xmk} applies. Furthermore, the continued fraction
converges if $rB\subset K_n\subset RB$ for all $n\geq1$ and for $r$
and $R$ satisfying the conditions of
Corollary~\ref{cor:uniform-simple}. By
Theorem~\ref{thr:non-constant-R-r}, the continued fraction converges
if $r_nB\subset K_n$ and $\liminf_{n\to\infty} n\log(r_nr_{n+1})>1$.

\begin{example}
  \label{ex:seidel}
  Let $K$ and $L$ be two different centred segments in the plane. Then
  the infinite sum $K+L+K+L+\cdots$ is the whole plane. However, the
  continued fraction $[K,L,K,L,\dots]$ diverges. Indeed, $K^*$ is a
  strip, so that $L+K^*$ is the same strip of a different width, so
  that $(L+K^*)^*$ if a scaled variant of $K$, and the successive
  iterations result in a sequence that alternates between a scale of
  $K$ and a polar to it. This example shows that a direct
  generalisation of the Seidel--Stern theorem on continued fractions
  with positive terms \cite[Th.~4.28]{jon:thr80} fails in the
  set-valued case.
\end{example}

\begin{example}
  \label{ex:two-periodic}
  Convex sets that appear as limits of periodic continued fractions
  might be regarded as a generalisation of quadratic irrational
  numbers. Consider the continued fraction $[K,L,K,L,\ldots]$ with two
  alternating terms $K,L\in\sCo$. By Corollary~\ref{cor:epsilon-in}
  together with Remark~\ref{rem:xmk}, this continued fraction
  converges if $K+(L+K^*)^*\supset aB$ and $L+(K+L^*)^*\supset aB$ for
  $a>1$, and both $K$ and $L$ contain a neighbourhood of the
  origin. 
\end{example}

\begin{example}
  \label{ex:three-periodic}
  In the setting of Example~\ref{ex:two-periodic}, the continued
  fraction diverges if $K$ and $L$ are two segments, see also
  Example~\ref{ex:seidel}. In order to obtain a converging continued
  fraction built of segments, consider three centred non-collinear
  segments $L_i=[-u_i,u_i]$, $i=1,2,3$, in the plane and the
  corresponding continued fraction $[K_1,K_2,\ldots]$, where
  $K_{3n+i}=[-v_{3n+i},v_{3n+i}]=L_i$ for $n\geq 0$ and
  $i=1,2,3$. Condition \eqref{eq:xmk} for $k=1$ amounts to
  \begin{displaymath}
    K_n+(K_{n+1}+K_{n+2}^*)^*\supset aB
  \end{displaymath}
  with $a>1$.  A direct geometric calculation shows that
  \begin{displaymath}
    (K_{n+1}+K_{n+2}^*)^*=\frac{1}{1+|\langle v_{n+1},v_{n+2}\rangle|}
    [-v_{n+2},v_{n+2}]\,.
  \end{displaymath}
  Thus, \eqref{eq:xmk} holds if 
  \begin{equation}
    \label{eq:ui}
    [-u_{i_1},u_{i_1}]+\frac{1}{1+|\langle u_{i_2},u_{i_3}\rangle|}
    [-u_{i_3},u_{i_3}]\supset aB
  \end{equation}
  for some $a>1$ and all permutations $(i_1,i_2,i_3)$ of
  $(1,2,3)$. This is always possible to achieve by increasing the
  lengths of the segments. Furthermore, \eqref{eq:limsupm} holds with
  $l=1$ if
  \begin{equation}
    \label{eq:kper}
    [K_{n-2k},\dots,K_n]\subset RB,\qquad 
    [K_{n-2k},\dots,K_{n+1}]\subset RB,
  \end{equation}
  for a finite fixed $R$ and all sufficiently large $n$. In case of
  segments, this condition holds, since each of the approximants in
  \eqref{eq:kper} take only three possible values and all they are
  compact, since
  \begin{displaymath}
    [K_n,K_{n+1},K_{n+2}]^*=K_n+(K_{n+1}+K_{n+2}^*)^*
  \end{displaymath}
  contains a neighbourhood of the origin.
\end{example}

\begin{remark}
  The Minkowski sum in the definition of set-valued continued
  fractions can be replaced by other operations with sets, e.g. the
  convex hull of the union, the $L_p$-sum or the radial sum, see
  \cite{schn2}. For instance, if the convex hull of the union $K\vee
  M=\mathrm{conv}(K\cup M)$ is chosen as the semigroup operation, then
  $K\vee tB\subset K+tB$, so that \eqref{eq:xh} holds in this case and
  the convergence results from Section~\ref{sec:cont-fract-semigr}
  apply.  Furthermore, it is possible to consider a sequence of
  alternating operations, e.g. the Minkowski sum and the radial
  sum. The latter case is particularly easy, since its reduces to the
  numerical continued fraction built of the values of the support
  function of the terms, so that the classical convergence criteria
  apply.
\end{remark}

\section{Space of non-negative convex functions}
\label{sec:space-non-negative}
\label{sec:legendre-transform}

Let $\KK=\Cvx_0(\R^d)$ be the space of \emph{convex functions}
$f:\R^d\mapsto[0,\infty]$ such that $f(0)=0$ with the arithmetic
addition as the semigroup operation and the pointwise partial order.
The \emph{Legendre--Fenchel transform} of a function $f$ is defined as
\begin{displaymath}
  f^*(x)=\sup_y (\langle x,y\rangle - f(y)),\quad x\in\R^d\,.
\end{displaymath}
It is well known that the Legendre--Fenchel transform is an order
reversing involution on $\Cvx_0(\R^d)$, see \cite{roc70}. The neutral
element is the function identically equal to zero, and its dual is the
convex function identically equal to infinity outside the origin. The
only self-polar function is $h(x)=\thf \|x\|^2$. The family $\KK_h$ is
the family of convex functions that admit a quadratic majorant, and the
metric $\rho_h$ is given by
\begin{displaymath}
  \rho_h(f,g)=\inf\{\eps>0:\; f(x)\leq g(x)+\frac{\eps}{2}\|x\|^2,\;
  g(x)\leq f(x)+\frac{\eps}{2}\|x\|^2,\; x\in\R^d\}\,.
\end{displaymath}
It is easy to see that \eqref{eq:3} holds and so $\rho_h$ is indeed a
metric. 

The dual operation (see Remark~\ref{rem:dual-addition}) to the
arithmetic addition is the \emph{inf-convolution}
\begin{displaymath}
  (f\oplus g)(x)=\inf_{x_1+x_2=x} (f(x_1)+g(x_2))\,.
\end{displaymath}

Consider the continued fraction 
\begin{displaymath}
  z_n=[f_1,\dots,f_n]\in\Cvx_0(\R^d)\,,\quad n\geq 1\,,
\end{displaymath}
generated by a sequence $\{f_n,n\geq1\}$ from $\Cvx_0(\R^d)$. 

\begin{theorem}
  \label{thr:legendre}
  Assume that a function $f\in\Cvx_0(\R^d)$ satisfies
  $\frac{r}{2}\|x\|^2\leq f(x)\leq \frac{R}{2}\|x\|^2$ for all
  $x\in\R^d$, where $R\in[r,\infty]$, and
  \begin{equation}
    \label{eq:func-rr}
    r^2+\frac{4r}{R}>4\,.
  \end{equation}
  Then the continued fraction $z_n=[f,\dots,f]$ with the constant term
  $f$ converges in the metric $\rho_h$ to $z\in \KK_h$ satisfying
  $z^*=z+f$. 
\end{theorem}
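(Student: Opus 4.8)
The plan is to derive Theorem~\ref{thr:legendre} as an application of Corollary~\ref{cor:uniform-simple} (or, where needed, Theorem~\ref{thr:uniform}) to the semigroup $\KK=\Cvx_0(\R^d)$ with the self-polar element $h(x)=\thf\|x\|^2$. The bounds $\frac{r}{2}\|x\|^2\leq f(x)\leq\frac{R}{2}\|x\|^2$ are exactly the statement that $rh\leq f\leq Rh$ in the partial order of $\KK$, and the space $(\KK_h,\rho_h)$ is complete (this can be checked directly, or is implicit in the setup; in any case it is the standing assumption needed to invoke the convergence theorems). So the only genuine work is to verify that the key Lipschitz-type condition \eqref{eq:xh} holds for the Legendre--Fenchel transform with respect to $h$, i.e. that
\begin{displaymath}
  \rho_h(f^*,(f+th)^*)\leq t,\qquad t>0,
\end{displaymath}
whenever $h\leq f$.

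First I would unwind what \eqref{eq:xh} says for the Legendre--Fenchel transform. If $h\leq f$, then $f(x)\geq\thf\|x\|^2$, so $f^*(x)\leq\thf\|x\|^2=h(x)$, i.e. $f^*\leq h$; similarly $(f+th)^*\leq h$. The inequality $(f+th)^*\leq f^*$ is automatic from order reversal since $f\leq f+th$. What must be shown is the reverse bound up to $th$: $f^*(x)\leq (f+th)^*(x)+\frac{t}{2}\|x\|^2$ for all $x$. Using the identity $(f+th)^*=f^*\oplus(th)^*$ (the inf-convolution, since the dual of the sum is the inf-convolution of the duals, and $(th)^*=t^{-1}h$), I would write
\begin{displaymath}
  (f+th)^*(x)=\inf_{u}\Bigl(f^*(x-u)+\frac{1}{2t}\|u\|^2\Bigr),
\end{displaymath}
and then estimate $f^*(x-u)\geq f^*(x)-\langle x,\text{(subgradient)}\rangle+\dots$ is awkward; instead I would use that $f^*\leq h$ is $\tfrac12\|\cdot\|^2$-Lipschitz-like, or better, exploit the quadratic self-polarity directly: since $f^*\leq h$ and $h$ itself satisfies $h=h\oplus h$-type scaling relations, one gets $f^*\oplus(t^{-1}h)\geq$ a controlled perturbation of $f^*$. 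The cleanest route is probably the one mirroring the set-valued proof in Theorem~\ref{thr:constant}: show the ``pointwise'' inclusion $f^*\leq(1+t)(f+th)^*$ first, using $s$-function/support-function-style manipulations (here, evaluating the conjugates and using $\langle x,y\rangle\leq\thf\|x\|^2+\thf\|y\|^2$), and then convert the factor $(1+t)$ into an additive $th$ error using $(1+t)g\leq g+t\|g\|_h\,h$ when $g\leq h$, exactly as $(1+\eps)A\subset A+\eps\|A\|B$ was used in the convex-set case.

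Once \eqref{eq:xh} is established, the proof concludes quickly. Condition \eqref{eq:func-rr}, namely $r^2+4r/R>4$, is precisely $\upsilon(r,R)=\thf(\sqrt{r^2+4r/R}+r)>\thf(2)=1$, so $\upsilon(r,R)>1$ and one is in the regime where Corollary~\ref{cor:uniform-simple} (case (i), or the boundary cases (ii)/(iii) which also reduce to $\upsilon(r,R)>1$) applies with the constant term $x=f$: the continued fraction $z_n=[f,\dots,f]$ converges in $\rho_h$ to some $z\in\KK_h$, and by the remark following Corollary~\ref{cor:constant-bis-3} the limit satisfies the fixed-point equation $z^*=z+f$.

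The main obstacle I expect is the verification of \eqref{eq:xh}: unlike the set-valued case where support functions linearise everything, here the inf-convolution with $t^{-1}h$ must be estimated, and care is needed because $f^*$ is only controlled by $f^*\leq h$ from above (and $f^*\geq R^{-1}h$ from below when $f\leq Rh$, but that is not needed for \eqref{eq:xh} itself). Getting the sharp constant $t$ (rather than some $Ct$) requires using the quadratic nature of $h$ in an essential way — specifically the algebraic identity that inf-convolving a quadratic-majorised function with $t^{-1}h$ shifts it by at most $th$ in $\rho_h$ — and this is the computation the proof will have to carry out honestly.
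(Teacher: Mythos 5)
Your overall reduction (completeness of $(\KK_h,\rho_h)$ plus a Lipschitz estimate for the conjugate, then one of the general convergence theorems) has the right shape, but the specific estimate you propose to verify is false, and this is precisely why the theorem carries the hypothesis $r^2+4r/R>4$ rather than the weaker conditions of Corollary~\ref{cor:uniform-simple}. Condition \eqref{eq:xh} does \emph{not} hold for the Legendre--Fenchel transform. Take $d=1$ and $g(u)=\max(0,2(|u|-1))$; then $g\leq h$ (because $(|u|-2)^2\geq0$), $g$ is closed convex with $g(0)=0$, so $f=g^*$ satisfies $f\geq h$ and $f^*=g$. Since $(f+th)^*(x)\leq f^*(x-s)+\frac{\|s\|^2}{2t}$ for every $s$ (Young's inequality), taking $x=\tfrac32$ and $s=2t$ gives, for $t\leq\tfrac14$,
\begin{displaymath}
  (f+th)^*\bigl(\tfrac32\bigr)\leq g\bigl(\tfrac32-2t\bigr)+\tfrac{(2t)^2}{2t}=1-2t\,,
\end{displaymath}
so $f^*(\tfrac32)-(f+th)^*(\tfrac32)\geq 2t$ while $t\,h(\tfrac32)=\tfrac98 t$; hence $\rho_h(f^*,(f+th)^*)\geq\tfrac{16}{9}t>t$. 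The same example kills your intermediate step $f^*\leq(1+t)(f+th)^*$, since $(1+t)(1-2t)<1=f^*(\tfrac32)$: the multiplicative inclusion that works for polar sets does not survive Legendre conjugation. The correct constant is the one obtained from the steepest slope $k_x=\|x\|(1+\sqrt{1-R^{-1}})$ of a line through $(x,\thf R^{-1}\|x\|^2)$ lying below $h$, which yields only the weaker condition \eqref{eq:xhc} with $C_R=1+\sqrt{1-R^{-1}}$, tending to $2$ as $R\to\infty$.

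This has two consequences for your argument. First, you cannot invoke Corollary~\ref{cor:uniform-simple}; you must use Theorem~\ref{thr:uniform} with \eqref{eq:urr}, i.e. $1+\sqrt{1-\upsilon(r,R)/\upsilon(R,r)}<\upsilon(r,R)$, and check that this is equivalent to \eqref{eq:func-rr}. Second, your identification of \eqref{eq:func-rr} with $\upsilon(r,R)>1$ is an algebra error: $r^2+4r/R>4$ says $\sqrt{r^2+4r/R}>2$, i.e. $\upsilon(r,R)>1+r/2$, which is strictly stronger than $\upsilon(r,R)>1$ (for $r=1$, $R=2$ one has $\upsilon=\thf(1+\sqrt3)>1$ but $r^2+4r/R=3<4$). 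That mismatch should itself have been a warning that the intended route is not via \eqref{eq:xh}. The remaining ingredients you list --- completeness of $\KK_h$ (checked pointwise), the translation $rh\leq f\leq Rh$ of the hypothesis, and the fixed-point equation $z^*=z+f$ for the limit --- are fine.
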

\begin{proof}
  It is easy to see that \eqref{eq:xhc} holds with $C_R\leq
  C_\infty=2$.  Indeed, if $f\geq h$, then
  \begin{displaymath}
    \langle x,x+v\rangle - f(x+v)
    \leq \langle x,x+v\rangle -\thf \|x+v\|^2
    =\thf \|x\|^2-\thf \|v\|^2\,,
  \end{displaymath}
  whence
  \begin{align*}
    f^*(x)&=\sup_{v\in\R^d} [\langle x,x+v\rangle - f(x+v)]\\
    &=\sup_{\|v\|\leq \|x\|} [\langle x,x+v\rangle - f(x+v)]\\
    &=\sup_{\|v\|\leq \|x\|} [\langle x,x+v\rangle - f(x+v)
    -\thf t\|x+v\|^2+\thf t\|x+v\|^2]\\
    &\leq \sup_{v\in\R^d} [\langle x,x+v\rangle - f(x+v)
    -\thf t\|x+v\|^2+2 t\|x\|^2]\\
    &= (f+th)^*(x)+4 th\,.
  \end{align*}
  Thus, $\rho_h(f^*,(f+th)^*)\leq 4t$. In order to improve the
  inequality, start by observing that
  \begin{align*}
    \rho_h(f^*,(f+th)^*)
    &=\inf\{\eps>0:\; f^*\leq (f+th)^*+\eps h\}\\
    &=\inf\{\eps>0:\; f^*\leq (f^*\oplus t^{-1}h)+\eps h\}\,.
  \end{align*}
  If $h\leq f\leq Rh$, then $R^{-1}h\leq f^*\leq h$. Therefore,
  \begin{displaymath}
    f^*(x-y)\geq f^*(x)-k_x\|y\|\,,
  \end{displaymath}
  where $k_x=\|x\|(1+\sqrt{1-R^{-1}})$. Indeed, $k_x$ is the steepest
  slope of the tangent line to the graph of $\thf \|u\|^2$,
  $u\in\R^d$, that passes through the point $(x,\thf
  R^{-1}\|x\|^2)\in\R^{d+1}$. Thus,
  \begin{align*}
    (f^*\oplus t^{-1}h)(x)
    &=\inf 
    (f^*(x-y)+\thf t^{-1}\|y\|^2)\\
    &\geq f^*(x) + \inf
    (-k_x\|y\|+\thf t^{-1}\|y\|^2)\\
    &\geq f^*(x)-\thf t \|x\|^2(1+\sqrt{1-R^{-1}})^2\,.
  \end{align*}
  Thus, \eqref{eq:xhc} holds with 
  \begin{equation}
    \label{eq:crf}
    C_R=1+\sqrt{1-R^{-1}}\,.
  \end{equation}
  Finally, \eqref{eq:urr} holds if
  \begin{displaymath}
    1+\sqrt{1-\frac{\upsilon(r,R)}{\upsilon(R,r)}}
    <\upsilon(r,R)\,,
  \end{displaymath}
  which is equivalent to the imposed condition \eqref{eq:func-rr}.

  It remains to show that $\KK_h$ is complete in the metric
  $\rho_h$. If $\{f_n\}$ is a fundamental sequence in $\rho_h$, then
  $f_n(x)$ is a fundamental sequence for each $x\in\R^d$, so that
  $f_n(x)\to f(x)$ for all $x$ and $f\in\KK_h$. Finally, $f_n\leq
  f_m+\eps h$ for any $\eps>0$ and all sufficiently large $n$ and $m$
  implies that $f\leq f_m+\eps h$ and $f_n\leq f+\eps h$ by letting
  $n\to\infty$ and $m\to\infty$.
\end{proof}

Given that \eqref{eq:xhc} holds with $C_R$ given by \eqref{eq:crf},
all results from Section~\ref{sec:convergence-results} can be used to
obtain further sufficient conditions for the convergence of continued
fractions in $\Cvx_0(\R^d)$. For instance,
Corollary~\ref{cor:constant-bis} implies that the continued fraction
with constant term $f$ converges if its approximants satisfy
$z_{2k+1}\leq a^{-1}h$ and $z_{2k}\geq b^{-1}h$ for some $k\geq1$ with
$a>C_{b/a}$. The latter condition amounts to $a>2-b^{-1}$. 

\bigskip

The other polarity transform (A-transform) on $\Cvx_0(\R^d)$
thoroughly analysed in \cite{art:mil11h} is given by
\begin{equation}
  \label{eq:atran}
  f^o(x)=
  \begin{cases}
    \sup \left\{\frac{\langle x,y\rangle-1}{f(y)}:\; y\in\R^d,\; f(y)>0\right\}, &
    x\in \{f^{-1}(0)\}^*\,,\\
    \infty, & \text{otherwise}\,.
  \end{cases}
\end{equation}
For simplicity, assume that $d=1$.  In this case the family of
self-polar functions includes
\begin{displaymath}
  h_p(x)=\left(\frac{(p-1)^{p-1}}{p^p}\right)^{\thf} \|x\|^p
\end{displaymath}
for any $p\in[1,\infty]$, see \cite{rotem12}.

For any finite positive outside the origin self-polar function $h$,
condition \eqref{eq:xh} holds. Indeed, if $f\geq h$, then
\begin{align*}
  f^o(x)&=\sup_{y\neq 0} \left[\frac{\langle x,y\rangle-1}{f(y)}
    -\frac{\langle x,y\rangle-1}{f(y)+th(y)}
    +\frac{\langle x,y\rangle-1}{f(y)+th(y)}\right]\\
  &\leq \sup_{y\neq 0} \frac{th(y)(\langle x,y\rangle-1)}{f(y)(f(y)+th(y))}
  +(f+th)^*(x)\\
  &\leq t\sup_{y\neq 0} \frac{\langle x,y\rangle-1}{h(y)}+(f+th)^*(x)\\
  &=th(x)+(f+th)^*(x)\,.
\end{align*}
Since $\KK_h$ is complete with the $\rho_h$ metric, 
Corollary~\ref{cor:uniform-simple} yields the convergence of continued
fractions with constant terms and further results from
Section~\ref{sec:convergence-results} 
apply in this case.

\section{Acknowledgements}
\label{sec:acknowledgements}

The author is grateful to the Department of Statistics of the
University Carlos III of Madrid for hospitality in 2012 when this work
commenced and to Bernardo d'Auria for numerous discussions at early
stages. The author has also benefited from discussions with Sergei
Foss, Daniel Hug, Takis Konstantopoulos and Matthias Reitzner.

\end{document}